\newtheorem{theorem}{Theorem}
\newtheorem{lemma}{Lemma}
\newtheorem{conjecture}{Conjecture}
\theoremstyle{remark}
\title{Uncertainty calibration for probabilistic projection methods}
\author{Vladimir Fanaskov}
\institute{Center for Design, Manufacturing, and Materials, Skoltech\\\email{Vladimir.Fanaskov@skoltech.ru}}
\titlerunning{Uncertainty calibration for probabilistic projection methods}
\date{\today}
\begin{document}
\maketitle
\begin{abstract}
    Classical Krylov subspace projection methods for the solution of linear problem $Ax = b$ output an approximate solution $\widetilde{x}\simeq x$. Recently, it has been recognized that projection methods can be understood from a statistical perspective. These probabilistic projection methods return a distribution $p(\widetilde{x})$ in place of a point estimate $\widetilde{x}$. The resulting uncertainty, codified as a distribution, can, in theory, be meaningfully combined with other uncertainties, can be propagated through computational pipelines, and can be used in the framework of probabilistic decision theory. The problem we address is that the current probabilistic projection methods lead to the poorly calibrated posterior distribution. We improve the covariance matrix from previous works in a way that it does not contain such undesirable objects as  $A^{-1}$ or $A^{-1}A^{-T}$, results in nontrivial uncertainty,  and reproduces an arbitrary projection method as a mean of the posterior distribution. We also propose a variant that is numerically inexpensive in the case the uncertainty is calibrated a priori. Since it usually is not, we put forward a practical way to calibrate uncertainty that performs reasonably well, albeit at the expense of roughly doubling the numerical cost of the underlying projection method.
    \keywords{probabilistic numerical methods \and projection methods \and uncertainty quantification}
\end{abstract}
\section{Introduction}
\label{section:Introduction}
One way to approximately solve $Ax = b, A\in\mathbb{R}^{n\times n}$ is to start from the initial guess $x_0$, choose two subspaces $\mathcal{K}, \mathcal{L}$ spanned by columns of matrices $V, W\in\mathbb{R}^{n\times m}$, $m\leq n$ and enforce Petrov--Galerkin condition: $\widetilde{x} = x_0 + \delta,~\delta\in\mathcal{K},~b - A\widetilde{x}\perp \mathcal{L}$. For suitably chosen subspaces, the new approximation reads
\begin{equation}\label{projection_method}
    \widetilde{x} = x_0 + V \left(W^T A V\right)^{-1}W^{T}\left(b - A x_{0}\right).
\end{equation}
Different choices of $V, W$ lead to different projection methods, amongst which are conjugate gradient algorithm, generalized minimum residual method, and others \cite{saad2003iterative}.

A series of  papers starting with the work on probabilistic reconstruction of quasi-Newton methods \cite{hennig2013quasi} led to Bayesian projection methods \cite{hennig2015probabilistic}, \cite{cockayne2018bayesian}, \cite{bartels2019probabilistic}. In contrast to classical projection methods that provide point estimation \eqref{projection_method}, probabilistic projection methods produces a distribution $p(\widetilde{x})$ that reflects uncertainty about the true solution $A^{-1}b$. In particular, in \cite{cockayne2018bayesian} and \cite{bartels2019probabilistic}, the  authors proved the following result:
\begin{theorem}
\label{theorem:prior_art}
    Let $\det A \neq 0$,  $p(x) = \mathcal{N}(x|x_0, \Sigma_{0})$ and $y_{m} = S_{m}^{T}Ax$, where $S_{m}\in\mathbb{R}^{n\times m}, m\leq n$ is a full-rank matrix. The mean of conditional distribution $p(x|y_{m} = S^{T}_{m}b) = \mathcal{N}(x|x_m, \Sigma_{m})$ reproduces projection method \eqref{projection_method} for three choices of prior distribution and search directions $S_{m}$:
    \begin{enumerate}
        \item $\Sigma_{0} = V V^{T}$ and $S_{m} = W$ result in $x_{m} = \widetilde{x}$, $\Sigma_{m} = 0$;
        \item In case $A$ is symmetric positive definite, the choice $\Sigma_{0} = A^{-1}$, $S_{m} = V$ results in $x_{m} = \left.\widetilde{x}\right|_{W=V}$, $\Sigma_{m} = A^{-1} - V \left(V^{T} A V\right)^{-1} V^{T}$;
        \item $\Sigma_{0} = \left(A^{T}A\right)^{-1}$, $S_{m} = AV$ result in $x_{m} = \left.\widetilde{x}\right|_{W=AV}$, $\Sigma_{m} = \left(A^{T}A\right)^{-1} - V \left(\left(AV\right)^{T} A V\right)^{-1} V^{T}$.
    \end{enumerate}
\end{theorem}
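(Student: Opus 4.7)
The plan is to derive all three cases from a single general formula: the Gaussian conditioning identity applied to the linear observation $y_m = Hx$ with $H := S_m^T A$. Under the prior $\mathcal{N}(x_0,\Sigma_0)$, the posterior covariance and mean are
\begin{align*}
\Sigma_m &= \Sigma_0 - \Sigma_0 H^T(H\Sigma_0 H^T)^{-1}H\Sigma_0,\\
x_m &= x_0 + \Sigma_0 H^T(H\Sigma_0 H^T)^{-1}(S_m^T b - H x_0),
\end{align*}
which after substituting $H=S_m^T A$ gives expressions in terms of $\Sigma_0 A^T S_m$ and $S_m^T A \Sigma_0 A^T S_m$. The strategy is to establish these formulas once (standard for linear Gaussian observations) and then to specialize $(\Sigma_0,S_m)$ in each of the three cases and simplify.

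For case 1 I would set $M := W^T A V$ and note that $\Sigma_0 A^T S_m = V V^T A^T W = V M^T$ while $S_m^T A \Sigma_0 A^T S_m = M M^T$. Provided $M$ is square and invertible, the algebraic identity $M^T(MM^T)^{-1} = M^{-1}$ collapses the Kalman gain $\Sigma_0 A^T S_m(S_m^T A\Sigma_0 A^T S_m)^{-1}$ to $V M^{-1} = V(W^T A V)^{-1}$, which reproduces \eqref{projection_method}; plugging the same identity into $\Sigma_m$ yields $V V^T - V M^{-1} M V^T = 0$. The main technical point to flag is that this case requires $V$ (and hence $M$) to have full rank and $W^T A V$ to be nonsingular; these are precisely the hypotheses under which the Petrov–Galerkin projection \eqref{projection_method} is well defined.

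For cases 2 and 3 I would simply carry out the substitution. In case 2, symmetry of $A$ gives $\Sigma_0 A^T S_m = A^{-1}A V = V$ and $S_m^T A \Sigma_0 A^T S_m = V^T A V$, so $x_m = x_0 + V(V^T A V)^{-1}V^T(b - A x_0)$ and $\Sigma_m = A^{-1} - V(V^T A V)^{-1}V^T$. In case 3, $\Sigma_0 A^T S_m = (A^T A)^{-1}A^T A V = V$ and $S_m^T A \Sigma_0 A^T S_m = V^T A^T A V$, which after the same substitution gives $x_m = \left.\widetilde{x}\right|_{W=AV}$ and $\Sigma_m = (A^T A)^{-1} - V(V^T A^T A V)^{-1}V^T$.

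I expect the only nontrivial step to be the algebraic collapse $M^T(MM^T)^{-1} = M^{-1}$ in case 1, since it depends on squareness and invertibility of $W^T A V$; cases 2 and 3 are then essentially routine once the general conditioning formula is in place. I would conclude by verifying that in each case the assumption $\det A \neq 0$ together with the full-rank hypothesis on $S_m$ guarantees all inverses that appear in $\Sigma_m$ and $x_m$ are well defined.
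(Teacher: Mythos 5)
Your proposal is correct and follows essentially the same route the paper relies on: the paper does not reprove Theorem~\ref{theorem:prior_art} (it is quoted from \cite{cockayne2018bayesian} and \cite{bartels2019probabilistic}), but the general Gaussian conditioning formulas you start from are exactly the ones the paper invokes at the beginning of the proof of Lemma~\ref{lemma:extended_prior}, and your case-by-case simplifications, including the collapse $M^{T}(MM^{T})^{-1}=M^{-1}$ for $M=W^{T}AV$ in case 1, are the standard and correct specializations. Your remark that case 1 needs $\det W^{T}AV\neq 0$ is apt, since the theorem statement only assumes $S_m$ full rank and tacitly inherits the well-definedness of \eqref{projection_method}.
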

No choice of the prior distributions in this theorem produces a useful covariance matrix. The first option leads to trivial uncertainty, while the other two are too expensive to compute. Moreover, as shown in \cite{bartels2019probabilistic} and \cite{cockayne2018bayesian}, posterior distributions of the last two choices are poorly calibrated for Krylov subspace methods. Further examination of priors reveals that they do not have free parameters, which renders uncertainty calibration impossible.

To address these problems, we propose an extension of the covariance matrix $\Sigma_{0} = VV^{T}$ that maintains the same mean of conditional distribution, but introduces a nontrivial covariance $\Sigma_{m}$. The main idea behind our construction stems from the observation made in \cite{bartels2019probabilistic}, that the first prior distribution is a probability density of random variable $x = x_0 + V v$, were $p(v)=\mathcal{N}(v|0,I)$. Perhaps it is not surprising that the posterior uncertainty is trivial, since the prior distribution puts no probability mass on the part of space where a projection method is not allowed to operate. Naturally, we seek a prior of the form $x = x_0 + V v + Y y$, $p(y)=\mathcal{N}(y|0,I)$, and restrict $Y$ to have meaningful mean and posterior covariance matrix.

In Section~\ref{section:Fixing_prior_distribution}, we completely characterize all possible choices of $Y$. Section~\ref{section:Uncertainty_calibration_for_abstract_projection_methods} contains a discussion of uncertainty calibration for abstract projection methods. A practical inexpensive construction of covariance matrix in terms of projectors is presented in Section~\ref{section:Construction_of_covariance_matrices}. In Section~\ref{section:When_probabilistic_projection_methods_are_sound} we argue that realistic Krylov subspace methods elude rigorous probabilistic interpretation. Given the popularity of Krylov subspace methods, we explain how uncertainty can be calibrated for them in Section~\ref{section:Uncertainty_calibration_for_Krylov_subspace_methods}. In Section~\ref{section:Comparison_with_BayesCG} we compare our approach with the related one, recently introduced in \cite{reid2020probabilistic}. In Section~\ref{section:Numerical_experiments} we perform a comparative study of different uncertainty calibration procedures on a several test problems that include a large family of small dense matrices, large and medium sparse matrices from SuiteSparse Matrix Collection\footnote{\url{https://sparse.tamu.edu}}, a finite-difference discretization of biharmonic equation and a PDE-constrained optimization problem.

\section{Notation}
\label{section:Notation}
In this section, we summarize some notation and definitions that we use in later parts of the paper.

For symmetric positive definite matrix $A$ we use the notation $A>0$.
For symmetric positive semidefinite matrix $A$ we use the notation $A\geq0$.

The direct sum of two matrices $A\in\mathbb{R}^{n\times m}$ and  $B\in\mathbb{R}^{l\times k}$, denoted   $A\oplus B$ is defined as
\begin{equation}
    \begin{pmatrix}
        A & 0_{n\times k}\\
        0_{l\times m} & B
    \end{pmatrix},~0_{n\times k}\in \mathbb{R}^{n\times k},~\left(0_{n\times k}\right)_{ij} = 0.
\end{equation}

For the dimension of a linear space $S$ we use the notation $\left|S\right|$.

Recall also that a pseudoinverse (Moore-Penrose inverse) of $A\in \mathbb{R}^{n\times m}$, given ${\sf rank}(A) = k$ is a matrix $A^{\dagger} = U D^{-1} V^{T}$, where columns of $U\in \mathbb{R}^{n \times k}$ are left singular vectors, columns of $V\in \mathbb{R}^{m \times k}$ are right singular vectors and diagonal matrix $D$ contains nonzero singular values $\sigma_{i}, i=1,\dots,k$, that is, $D\in\mathbb{R}^{k\times k}:D_{ij} = \sigma_{i}\delta_{ij}$ (see \cite[Lecture 4]{MR1444820}).

For matrix $Y\in\mathbb{R}^{n\times m}$ we use $Y_{\star i}, i=1,\dots, m$ to indicate column $i$ and $Y_{i\star}, i=1,\dots, n$ to indicate row $i$.

We denote indicator function for condition $x$ as
\begin{equation}
    {\sf Ind}[x] =
    \begin{cases}
        1,\text{ if condition }x\text{ holds};\\
        0,\text{ otherwise}.\\
    \end{cases}
\end{equation}

For arbitrary positive semidefinite covariance matrix $\mathbb{R}^{n\times n}\ni\Sigma = UD U^{T}\geq0$, ${\sf rank} (\Sigma) = k\leq n$, $\mathbb{R}^{k\times k}\ni D>0$, $U\in \mathbb{R}^{n\times k}$ and mean vector $\mu\in\mathbb{R}^{n}$ we define two random variables by their probability density functions. The first one is multivariate normal
\begin{equation}
    \label{multivariate_normal}
    \begin{split}
    &\mathcal{N}(x|\mu, \Sigma) = C\frac{\text{Ind}\left[U^{T}x\neq U^{T}\mu\right]}{\exp\left(\frac{1}{2}(x-\mu)^{T}\Sigma^{\dagger}(x-\mu)\right)}, \\
    &C = \frac{1}{\sqrt{(2\pi)^{k}\det{D}}}.
    \end{split}
\end{equation}
The second one is multivariate Student
\begin{equation}
    \label{Student}
    \begin{split}
    &\text{St}_{\nu}(x|\mu, \Sigma) = C\frac{\text{Ind}\left[U^{T}x\neq U^{T}\mu\right]}{\left(1+\frac{1}{\nu}(x-\mu)^{T}\Sigma^{\dagger}(x-\mu)\right)^{\frac{\nu + k}{2}}}, \\
    &C = \frac{\Gamma\left(\frac{\nu + k}{2}\right)}{\Gamma\left(\frac{\nu }{2}\right)\sqrt{(\pi\nu)^{k}\det D}}.
    \end{split}
\end{equation}
These two probability densities are constructed so that it is not possible to draw a random variable that belongs to the nullspace of the covariance matrix.

The other three distributions, i.e., inverse gamma, $F$-distribution, and $\chi^2$, are used in their standard form.

As noted above, a projection method is defined by two subspaces $\mathcal{K} = {\sf range}(V)$ and $\mathcal{L} = {\sf range}(W)$. We say that such a method is well-defined if $W^{T}AV$ is invertible. Conditions on $W$ and $V$ for a projection method to be well-defined can be found in e.g. \cite{saad2003iterative}.

In practical applications of projection methods rounding errors are important. In this article all results are given for exact arithmetic. This is not a major restriction, because we reproduce projection method exactly. This implies the whole body of known results on rounding error in projection methods can be applied as is.

\section{Fixing prior distribution}
\label{section:Fixing_prior_distribution}
In this section, we establish a sufficiently general form of $\Sigma_{0}$ that leads to nontrivial uncertainty for probabilistic projection methods. We start by proving three lemmas and then gather all results in Theorem~\ref{theorem:main_result}.
\begin{lemma}
\label{lemma:extended_prior}
    Let $V$ and $W$ lead to a well-defined projection method \eqref{projection_method}, $p(x) = \mathcal{N}\left(x|x_{0}, \Sigma_{0}\right)$, $y_{m} = S^{T}_{m}Ax$, $p(x|y_{m} = S^{T}_{m}b) = \mathcal{N}(x|x_m, \Sigma_{m})$. If we take covariance matrix $\Sigma_{0} = VV^{T} + \Psi$ and search directions $S_{m} = W$, where $\Psi$ satisfies $W^{T} A \Psi = 0$, $\Psi\geq0$, the resulting mean and covariance matrix are $x_{m} = \widetilde{x}$ from \eqref{projection_method} and $\Sigma_{m} = \Psi$.
\end{lemma}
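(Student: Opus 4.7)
\textbf{Plan of proof for Lemma~\ref{lemma:extended_prior}.} The approach is a direct computation using the standard Gaussian conditioning identities, specialised to a noiseless linear observation. For a Gaussian prior $x\sim\mathcal{N}(x_0,\Sigma_0)$ and an observation $y_m = B x$ with $B = S_m^T A = W^T A$, the posterior is again Gaussian with mean and covariance
\begin{equation*}
    x_m = x_0 + \Sigma_0 B^{T}\bigl(B\Sigma_0 B^{T}\bigr)^{-1}\bigl(W^{T}b - B x_0\bigr),
\end{equation*}
\begin{equation*}
    \Sigma_m = \Sigma_0 - \Sigma_0 B^{T}\bigl(B\Sigma_0 B^{T}\bigr)^{-1}B\Sigma_0,
\end{equation*}
valid (even when $\Sigma_0$ is singular, as here, since $VV^{T}$ has rank at most $m$) as soon as $B\Sigma_0 B^{T}$ is invertible; I would start by recalling this fact and its compatibility with the pseudoinverse-based density \eqref{multivariate_normal}.

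Next I would exploit the hypothesis $W^{T}A\Psi=0$, which by transposition also gives $\Psi A^{T}W = 0$ (here $\Psi$ is symmetric because $\Psi\geq0$). Substituting $\Sigma_0 = VV^{T}+\Psi$ and $S_m = W$ produces the two key simplifications
\begin{equation*}
    \Sigma_0 A^{T}W = V V^{T}A^{T}W, \qquad W^{T}A\Sigma_0 A^{T}W = (W^{T}AV)(W^{T}AV)^{T}.
\end{equation*}
Since the projection method is well-defined, $W^{T}AV$ is invertible, so the Gram-type matrix on the right is invertible too, and its inverse factors as $(W^{T}AV)^{-T}(W^{T}AV)^{-1}$. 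Plugging these into the formulas collapses
\begin{equation*}
    \Sigma_0 A^{T}W\bigl(W^{T}A\Sigma_0 A^{T}W\bigr)^{-1} = V(W^{T}AV)^{-1},
\end{equation*}
which immediately yields $x_m = x_0 + V(W^{T}AV)^{-1}W^{T}(b-Ax_0) = \widetilde{x}$.

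For the covariance, the same identity combined with $W^{T}A\Sigma_0 = W^{T}AV V^{T}$ (again using $W^{T}A\Psi=0$) gives
\begin{equation*}
    \Sigma_0 B^{T}\bigl(B\Sigma_0 B^{T}\bigr)^{-1}B\Sigma_0 = V(W^{T}AV)^{-1}(W^{T}AV)V^{T} = VV^{T},
\end{equation*}
so that $\Sigma_m = (VV^{T}+\Psi) - VV^{T} = \Psi$, which is precisely the claim. The only step that requires real care, and therefore the main (mild) obstacle, is justifying that the conditioning formulas continue to apply when $\Sigma_0$ is rank-deficient; this is handled by verifying that $B\Sigma_0 B^{T}$ is invertible (done above) and by interpreting Gaussian densities through the pseudoinverse as in \eqref{multivariate_normal}. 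Everything else reduces to the two observations $\Psi A^{T}W = 0$ and invertibility of $W^{T}AV$.
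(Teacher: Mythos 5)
Your proposal is correct and follows essentially the same route as the paper: both start from the standard Gaussian conditioning formulas for a noiseless linear observation, use $W^{T}A\Psi=0$ (equivalently $\Psi A^{T}W=0$) to reduce $\Sigma_0 A^{T}W$ to $VV^{T}A^{T}W$, factor the inverse of $(W^{T}AV)(W^{T}AV)^{T}$, and collapse the correction terms to recover $\widetilde{x}$ and $\Sigma_m=\Psi$. Your explicit remark about the validity of the conditioning formulas for rank-deficient $\Sigma_0$ is a welcome extra precaution that the paper leaves implicit.
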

\begin{proof}
General result \cite{bartels2019probabilistic} for mean and covariance are
\begin{equation*}
    \begin{split}
        &x_{m} = x_0 + \Sigma_{0}A^{T}S_{m}\left(S_{m}^{T}A\Sigma_{0}A^{T}S_{m}\right)^{-1}S_{m}^{T}\left(b-Ax_{0}\right),\\
        &\Sigma_{m} = \Sigma_0 - \Sigma_{0}A^{T}S_{m}\left(S_{m}^{T}A\Sigma_{0}A^{T}S_{m}\right)^{-1}S_{m}^{T}A\Sigma_{0}.
    \end{split}
\end{equation*}
Matrix $\Sigma_{0}A^{T}S_{m}$ and its transpose appear frequently in $x_{m}$ and $\Sigma_{m}$. For a chosen covariance matrix $\Sigma_0$ this combination has a simple form
\begin{equation}
    \Sigma_{0}A^{T}S_{m} = \left(VV^T + \Psi\right)A^{T}W = V \left(V^{T}A^{T}W\right),
\end{equation}
where the second equality follows from the condition $W^{T} A \Psi = 0$. Using this form of $\Sigma_{0}A^{T}S_{m}$ we find
\begin{equation}
    \left(S_{m}^{T}A\Sigma_{0}A^{T}S_{m}\right)^{-1} = \left(V^{T}A^{T} W\right)^{-1}\left(W^{T}A V\right)^{-1}.
\end{equation}
This implies that the second part of the covariance matrix simplifies as follows
\begin{equation}
    \Sigma_{0}A^{T}S_{m}\left(S_{m}^{T}A\Sigma_{0}A^{T}S_{m}\right)^{-1}S_{m}^{T}A\Sigma_{0} = VV^{T},
\end{equation}
from which we conclude that
\begin{equation}
    \Sigma_{m} = VV^T + \Psi - VV^T = \Psi.
\end{equation}
In the same vein, using
\begin{equation}
    \Sigma_{0}A^{T}S_{m}\left(S_{m}^{T}A\Sigma_{0}A^{T}S_{m}\right)^{-1}S_{m}^{T} = V \left(W^TAV\right)^{-1}W^{T}
\end{equation}
we can obtain $x_{m} = x_0 + V \left(W^TAV\right)^{-1}W^{T}(b- Ax_0)$ for the mean vector.
\end{proof}

As the following result shows, matrix $\Psi$ exists under mild conditions.
\begin{lemma}
    \label{lemma:existence}
    For invertible $A\in\mathbb{R}^{n\times n}$ and full-rank $W\in\mathbb{R}^{n\times m}$, $m\leq n$, there exists a full-rank $Y\in\mathbb{R}^{n\times k}$, $k\leq n-m$ for which $W^{T} A Y = 0$. As such, we can take $\Psi = Y G Y^{T}$ for any conformable $G>0$.
\end{lemma}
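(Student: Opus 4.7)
The plan is to exhibit $Y$ explicitly as a basis matrix for (a subspace of) the nullspace of $W^{T}A$, and then verify that $\Psi = YGY^{T}$ has the required properties.

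First I will compute the rank of $B := W^{T}A \in \mathbb{R}^{m\times n}$. Since rank is preserved under multiplication by invertible matrices, $\mathrm{rank}(B) = \mathrm{rank}(B^{T}) = \mathrm{rank}(A^{T}W) = \mathrm{rank}(W) = m$, where the last equality uses invertibility of $A^{T}$ and the hypothesis that $W$ has full column rank $m$. By the rank--nullity theorem, $\dim\ker(W^{T}A) = n - m$. Hence for any $k$ with $1 \leq k \leq n-m$ (the edge case $k=0$ is vacuous) we may select $k$ linearly independent vectors $y_{1},\dots,y_{k}$ from $\ker(W^{T}A)$ and assemble them as columns of $Y \in \mathbb{R}^{n\times k}$. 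By construction $Y$ has full column rank and $W^{T}AY = 0$.

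Next I verify the properties of $\Psi = YGY^{T}$ needed in Lemma~\ref{lemma:extended_prior}. The identity $W^{T}A\Psi = (W^{T}AY)GY^{T} = 0$ is immediate from $W^{T}AY = 0$. For positive semidefiniteness, any $u\in\mathbb{R}^{n}$ satisfies
\begin{equation}
u^{T}\Psi u = (Y^{T}u)^{T} G (Y^{T}u) \geq 0,
\end{equation}
since $G > 0$. Thus $\Psi \geq 0$, and together with the nullspace condition this is exactly what the hypothesis of Lemma~\ref{lemma:extended_prior} requires.

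There is essentially no real obstacle here: the argument is a direct application of rank--nullity combined with the observation that a Gram-type construction $YGY^{T}$ with $G>0$ is automatically positive semidefinite. The only point worth flagging is the degenerate case $n = m$, in which $\ker(W^{T}A) = \{0\}$ forces $k = 0$; the statement is then vacuous but consistent, since one recovers $\Psi = 0$ and Lemma~\ref{lemma:extended_prior} reduces to the first case of Theorem~\ref{theorem:prior_art}.
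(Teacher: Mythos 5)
Your proof is correct and follows essentially the same route as the paper: both arguments use the invertibility of $A^{T}$ to conclude $\mathrm{rank}(W^{T}A) = \mathrm{rank}(W) = m$, apply rank--nullity to get an $(n-m)$-dimensional nullspace, and stack $k \leq n-m$ independent null vectors into $Y$. Your explicit verification that $\Psi = YGY^{T}$ is positive semidefinite and annihilated by $W^{T}A$ is a small addition the paper leaves implicit, but it does not change the substance of the argument.
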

\begin{proof}
    Note, that $\left|{\sf Null}\left(W^{T}A\right)\right| = n-\left|{\sf Range}\left(A^{T}W\right)\right|=n-m$. The last equality follows from the fact that $A^{T}$ is invertible, so $W$ and $A^TW$ has the same rank. From this we conclude that there are exactly $n-m$ linearly independent vectors that span ${\sf Null}\left(W^{T}A\right)$. Stacking $k\leq n-m$ of them together we can construct $Y$.
\end{proof}
Next, we show that $\Psi$ can be chosen to have $\Sigma_{0}>0$, given $W^{T}AV$ is invertible. To demonstrate that we need to prove that for a  well-defined projection method it is always possible to supplement $m$ vectors $V_{\star i}$ with $n-m$ vectors $Y_{n-m}$ to form a basis for $\mathbb{R}^{n}$. Indeed, if this is the case, $\Sigma_{0} = V V^{T} + Y G Y^{T}>0$ since it is clearly positive semidefinite for any $G>0$, and there is no $x$ such that $x^{T} \Sigma_{0} x = 0$ because ${\sf Range}\left(V\right)\cup {\sf Range}\left(Y\right) = \mathbb{R}^{n}$.
\begin{lemma}
    \label{lemma:coverage}
     If $V$ and $W$ lead to a well-defined projection method \eqref{projection_method}, $m$ linearly independent vectors $V_{\star i}$ along with $n-m$ linearly independent $Y_{\star i}: W^{T}AY=0$ form basis for $\mathbb{R}^{n}$.
\end{lemma}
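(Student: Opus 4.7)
The plan is to reduce this to checking linear independence. Since the claim produces $m+(n-m)=n$ vectors in $\mathbb{R}^{n}$, it is enough to show that the combined system $\{V_{\star 1},\dots,V_{\star m},Y_{\star 1},\dots,Y_{\star,n-m}\}$ is linearly independent; then it is automatically a basis.

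Before starting, I would record the two ingredients that the hypotheses give us. First, invertibility of $W^{T}AV$ forces $V$ to have full column rank $m$ (otherwise $W^{T}AV$ would have a nontrivial kernel). Second, by the dimension count already used in Lemma~\ref{lemma:existence}, $\left|{\sf Null}(W^{T}A)\right|=n-m$, so the $n-m$ linearly independent vectors $Y_{\star i}$ form a basis of ${\sf Null}(W^{T}A)$; in particular $Y$ has full column rank $n-m$.

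The core step is then a one-line elimination argument. Suppose $Vc+Yd=0$ for some $c\in\mathbb{R}^{m}$ and $d\in\mathbb{R}^{n-m}$. Premultiplying by $W^{T}A$ and using $W^{T}AY=0$ yields $W^{T}AVc=0$, and invertibility of $W^{T}AV$ gives $c=0$. Substituting back leaves $Yd=0$, and since the columns of $Y$ are linearly independent, $d=0$.

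There is no real obstacle here; the result is essentially a rank/kernel bookkeeping exercise, and the only place where the hypotheses are genuinely used is the invertibility of $W^{T}AV$, which is exactly what well-definedness of the projection method means. The one thing worth writing carefully is the justification that the $m$ chosen columns of $V$ are themselves linearly independent, since the lemma's statement presupposes this only implicitly through ``$m$ linearly independent vectors $V_{\star i}$''; this is immediate from the invertibility of $W^{T}AV$ but is worth noting explicitly.
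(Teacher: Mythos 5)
Your proof is correct, and it takes a different (and in fact more robust) route than the paper. The paper argues via the orthogonal decomposition $\mathbb{R}^{n}={\sf Null}\left(W^{T}A\right)\oplus{\sf Range}\left(A^{T}W\right)$ and claims that since $V_{\star i}\notin{\sf Null}\left(W^{T}A\right)$ the columns of $V$ lie in, and hence span, ${\sf Range}\left(A^{T}W\right)$. That intermediate step is not literally valid: a vector outside a subspace need not lie in its orthogonal complement, and indeed ${\sf Range}(V)$ generally differs from ${\sf Range}\left(A^{T}W\right)$ (take $A=I$ with $W^{T}V$ invertible but ${\sf Range}(V)\neq{\sf Range}(W)$). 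What is true, and what the lemma needs, is only that ${\sf Range}(V)$ and ${\sf Null}\left(W^{T}A\right)$ intersect trivially and have complementary dimensions, which is exactly what your elimination argument establishes: premultiplying $Vc+Yd=0$ by $W^{T}A$ kills the $Y$ term, invertibility of $W^{T}AV$ forces $c=0$, and full column rank of $Y$ forces $d=0$. Your approach buys a direct, airtight linear-independence check that uses the hypotheses exactly where they matter; the paper's approach, once the erroneous identification of ${\sf Range}(V)$ with ${\sf Range}\left(A^{T}W\right)$ is repaired (e.g.\ by replacing it with your trivial-intersection argument), offers the slightly more geometric picture of a (non-orthogonal) direct sum decomposition of $\mathbb{R}^{n}$. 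Your closing remark that the linear independence of the columns of $V$ itself follows from the invertibility of $W^{T}AV$ is a worthwhile addition that the paper leaves implicit.
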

\begin{proof}
    It is easy to see that $W^{T}AV$ is invertible iff no vector from $A\mathcal{K} = {\sf Range}\left(AV\right)$ is orthogonal to $\mathcal{L}={\sf Range}\left(W\right)$.
    Vectors $Y_{\star i}$, where $i\leq n-m$, form basis for ${\sf Null}\left(W^{T}A\right)$, whereas $m$ vectors $V_{\star i}\notin {\sf Null}\left(W^{T}A\right)$, hence $V_{\star i}\in {\sf Range}\left(A^{T}W\right)$. By definition $V_{\star i}$ are linearly independent, so they form a basis for ${\sf Range}\left(A^{T}W\right)$. According to a fundamental result of linear algebra $\mathbb{R}^{n} = {\sf Null} \left(W^{T}A\right)\cup{\sf Range} \left(A^{T}W\right)$, which means columns of $V$ and $Y$ form a basis for $\mathbb{R}^{n}$.
\end{proof}

We summarize all results of this section in the following statement:
\begin{theorem}
\label{theorem:main_result}
    Let the following be true:
    \begin{enumerate}
        \item Matrix $A$ is invertible, $W, V\in \mathbb{R}^{n\times m}$ are full-rank matrices, and $\det W^{T}AV\neq 0$;
        \item Solution of $Ax = b$ is a normal random variable with probability density function $p(x) = \mathcal{N}(x|x_0, \Sigma_{0})$;
        \item Covariance matrix $\Sigma_{0}$ has a form $\Sigma_{0}=VV^{T} + Y G Y^{T}$, where ${\sf Range}\left(Y\right) = {\sf Null}\left(W^{T}A\right)$ and $G\geq0$;
        \item Random variable $y = W^{T}Ax$ represents information available to a projection method.
    \end{enumerate}
    Then under these conditions $p(x|y = W^{T}b) = \mathcal{N}(x|\widetilde{x}, Y G Y^{T})$, where $\widetilde{x}$ is defined by \eqref{projection_method}.
\end{theorem}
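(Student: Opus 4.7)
The plan is to assemble the theorem directly from the three preceding lemmas, with Lemma~\ref{lemma:extended_prior} doing essentially all of the work and Lemmas~\ref{lemma:existence} and \ref{lemma:coverage} justifying that the class of priors described in hypothesis~(3) is nonempty (and indeed can be chosen to make $\Sigma_{0}$ strictly positive definite when $G>0$). So the strategy reduces to identifying the object $\Psi$ from Lemma~\ref{lemma:extended_prior} inside the hypotheses of the theorem and then checking that every assumption of that lemma is satisfied.

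Concretely, I would set $\Psi := Y G Y^{T}$ and verify the two conditions $\Psi\geq 0$ and $W^{T}A\Psi = 0$. The first is immediate from $G\geq 0$, since then $v^{T}Y G Y^{T} v = (Y^{T}v)^{T}G(Y^{T}v)\geq 0$ for every $v$. The second follows from hypothesis~(3): because ${\sf Range}(Y) = {\sf Null}(W^{T}A)$, each column of $Y$ lies in the nullspace of $W^{T}A$, hence $W^{T}AY = 0$ and therefore $W^{T}A\Psi = (W^{T}AY)GY^{T} = 0$. The well-definedness hypothesis $\det W^{T}AV\neq 0$ together with full rank of $V,W$ is exactly what Lemma~\ref{lemma:extended_prior} requires for \eqref{projection_method}.

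With these checks in hand, Lemma~\ref{lemma:extended_prior} is applicable with the chosen $\Sigma_{0} = VV^{T}+\Psi$ and search directions $S_{m} = W$, yielding
\begin{equation*}
p(x\mid y = W^{T}b) = \mathcal{N}\!\left(x\,\big|\,\widetilde{x},\,\Psi\right) = \mathcal{N}\!\left(x\,\big|\,\widetilde{x},\,Y G Y^{T}\right),
\end{equation*}
which is the conclusion. Lemma~\ref{lemma:existence} supplies the existence of a full-rank $Y\in\mathbb{R}^{n\times(n-m)}$ with $W^{T}AY=0$ (the dimension count there, ${\sf dim}\,{\sf Null}(W^{T}A) = n-m$, is exactly what makes the equality ${\sf Range}(Y) = {\sf Null}(W^{T}A)$ achievable), and Lemma~\ref{lemma:coverage} guarantees that under the given hypotheses the columns of $V$ and $Y$ span $\mathbb{R}^{n}$, so the prior covariance is positive definite whenever $G>0$.

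There is no real obstacle; the only subtlety worth flagging is dimensional bookkeeping for $Y$. One must distinguish between the mere existence of \emph{some} $Y$ annihilating $W^{T}A$ (Lemma~\ref{lemma:existence}, $k\leq n-m$) and the stronger choice ${\sf Range}(Y)={\sf Null}(W^{T}A)$ demanded in hypothesis~(3), which fixes $k=n-m$. This stronger choice is precisely what makes the positive-definiteness argument based on Lemma~\ref{lemma:coverage} go through, but it is not needed for Lemma~\ref{lemma:extended_prior}, so the posterior formula still holds for the wider class $G\geq 0$ stated in the theorem.
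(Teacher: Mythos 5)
Your proposal is correct and follows essentially the same route as the paper, which presents Theorem~\ref{theorem:main_result} explicitly as a summary of the preceding lemmas: set $\Psi = YGY^{T}$, verify $\Psi\geq 0$ and $W^{T}A\Psi=0$ from ${\sf Range}(Y)={\sf Null}(W^{T}A)$, and invoke Lemma~\ref{lemma:extended_prior}, with Lemmas~\ref{lemma:existence} and \ref{lemma:coverage} supplying existence and the spanning property. Your remark distinguishing $k\leq n-m$ in Lemma~\ref{lemma:existence} from the forced $k=n-m$ in hypothesis~(3) is a correct and worthwhile clarification.
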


The proposed covariance matrix has a clear geometric meaning. It is easy to see that $x$ from Theorem~\ref{theorem:main_result} can be represented as a sum of two independent random variables $x = x_0 + V v + Y G^{1/2} y$, where $p(v) = \mathcal{N}\left(v|0, I\right)$ and $p(y) = \mathcal{N}\left(v|0, I\right)$. So, the part $V V^T$ corresponds to the vector that is sampled from ${\sf Range} (V)$, whereas the second part $Y G Y^T$ accounts for the subspace ${\sf Null} (W^TA)$ in accordance with Petrov-Galerkin condition $W^{T}(b - A \widetilde{x}) = W^{T}A(A^{-1}b - x) = 0$. Thanks to Lemma~\ref{lemma:coverage} we known that sampling $x$ we can reproduce any vector from $\mathbb{R}^{n}$, so prior distribution is suitable for an arbitrary right-hand side. Adjusting $G\geq0$ we can control how $x$ is distributed in ${\sf Range}\left(Y\right)$ (see Lemma~\ref{lemma:alignment} for a quantitative result). On the other hand it is not possible to control the distribution inside ${\sf Range}\left( V\right)$. This does not pose any problem, since as a result of projection process, the solution vector is completely defined within subspace ${\sf Range}\left( V\right)$.

\section{Uncertainty calibration for abstract projection methods}
\label{section:Uncertainty_calibration_for_abstract_projection_methods}

To be useful in practical applications (for example, in probabilistic decision theory, sensitivity analysis and others) probability density function produced by probabilistic projection methods should be meaningfully related to the actual error. In \cite{cockayne2018bayesian} authors propose a statistical criterion for uncertainty calibration: ``When the UQ is well-calibrated, we could consider $\mathbf{x}^{\star}$ [the solution $A^{-1}b$] as plausibly being drawn from the posterior distribution $\mathcal{N}(x_{m}, \Sigma_{m})$.'' Based on this statements authors suggest a test statistic $Z(x^{\star})\equiv\left\|x^{\star} - \widetilde{x}\right\|_{\Sigma_{m}^{\dagger}}^{2}\sim\chi_{n-m}^2$. In what follows we refer to $Z(x^{\star})$ as $Z-$statistic. We now show that, according to this definition, the prior proposed in Theorem~\ref{theorem:main_result} provides a perfect uncertainty calibration.
\begin{theorem}
    \label{theorem:perfect_uncertainty_calibration}
    Let $x^{\star} = x_0 + V v + Y G^{1/2}y$, where $v$ and $y$ are independent random variables, $v$ has arbitrary distribution and $p(y) = \mathcal{N}\left(y|0, I\right)$. Under conditions of Theorem~\ref{theorem:main_result}, a posterior distribution is well-calibrated:
   \begin{enumerate}
       \item $p\left(x|y = W^{T}\left(Ax_0 + AV v_{0}\right)\right) = p(x^{\star}|v=v_{0})$
        \item $\left\|x^{\star} - \widetilde{x}\right\|_{\left(YGY^{T}\right)^{\dagger}}^{2}\sim\chi_{n-m}^2$
   \end{enumerate}
\end{theorem}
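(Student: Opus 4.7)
The plan is to exploit the structural decomposition $x^\star = x_0 + V v + Y G^{1/2} y$ together with the identity $W^{T} A Y = 0$ guaranteed by ${\sf Range}(Y) = {\sf Null}(W^{T} A)$. Both claims then reduce to elementary Gaussian manipulations on top of Theorem~\ref{theorem:main_result}.

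For part 1, I would first apply $W^{T} A$ to $x^\star$: the $y$-term vanishes because $W^{T} A Y = 0$, leaving $W^{T} A x^\star = W^{T} A x_0 + W^{T} A V v$. Since $W^{T} A V$ is invertible, the conditioning event $y = W^{T}(A x_0 + A V v_0)$ is pointwise equivalent to fixing $v = v_0$. Substituting $b = A x_0 + A V v_0$ into the projection formula \eqref{projection_method} yields $\widetilde{x} = x_0 + V v_0$, which coincides with the conditional mean of $x^\star$ given $v = v_0$. The posterior covariance $Y G Y^{T}$ delivered by Theorem~\ref{theorem:main_result} matches the conditional covariance of $Y G^{1/2} y$ on the other side, so the two Gaussian laws agree.

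For part 2, the argument above gives $x^\star - \widetilde{x} = Y G^{1/2} y$ with $y \sim \mathcal{N}(0, I_{n-m})$. Setting $B := Y G^{1/2}$, which has full column rank $n-m$ when $G > 0$, I would use the identity $(B B^{T})^{\dagger} = B (B^{T} B)^{-2} B^{T}$ (easily checked from the Moore--Penrose axioms) to compute
\begin{equation*}
    (Y G^{1/2} y)^{T} (Y G Y^{T})^{\dagger} (Y G^{1/2} y) = y^{T} (B^{T} B)(B^{T} B)^{-2}(B^{T} B) y = y^{T} y,
\end{equation*}
which is a sum of $n - m$ independent squared standard normals, hence $\chi^{2}_{n-m}$.

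The step that will need the most care is the conditioning argument in part 1: one must justify rigorously that the event on $y$ determines $v$ uniquely and that it is exactly this $v$ that carries over to the conditional law of $x^\star$, so that conditioning in the structural model reproduces the Bayesian posterior of Theorem~\ref{theorem:main_result} and not merely a distribution with matching first two moments. The pseudoinverse algebra in part 2 is routine once $G > 0$ is assumed so that $B$ has rank exactly $n - m$; if $G$ were merely positive semidefinite the degrees of freedom would drop to ${\sf rank}(G)$.
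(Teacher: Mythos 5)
Your proposal is correct and follows essentially the same route as the paper's own proof: part 1 by substituting $b = Ax_0 + AVv_0$ into the projection formula to match the mean and invoking Theorem~\ref{theorem:main_result} for the covariance, and part 2 via the identity $\left(XX^T\right)^{\dagger} = X\left(X^TX\right)^{-2}X^T$ applied to a full-column-rank factor of $YGY^T$. Your closing caveat that the $\chi^2_{n-m}$ degrees of freedom require ${\sf rank}\left(YGY^T\right)=n-m$ (i.e.\ $G>0$) is a point the paper leaves implicit, but otherwise the two arguments coincide.
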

\begin{proof}
\begin{enumerate}
    \item Both random variables are normal, so it is sufficient to demonstrate that first two moments are equal. Substitution of $b=AV v_0 + Ax_0$ into the definition of general projection method \eqref{projection_method} gives us $x_0 + V v_0$ which is a mean of random variable $x^{\star}$ given $v=v_{0}$. Covariance matrices coincide as a consequence of Theorem~\ref{theorem:main_result} and definition of $x^{\star}$.
    \item After the projection step, arbitrary sample of random variable $v$ is completely specified, because $Vv\in{\sf Range}(V)$. Namely, $\widetilde{x}=x_{0} + Vv$, which implies $p(x^{\star}-\widetilde{x})=\mathcal{N}(\cdot|0,YGY^{T})$. Now, since $YGY^{T}$ is positive semidefinite, it is always possible to find a full-rank matrix $X\in \mathbb{R}^{n\times k}$, where $k = {\sf rank}(YGY^{T})$ such that $XX^T$ coincides with $YGY^{T}$. It is easy to check that
    \begin{equation}
        \left(YGY^{T}\right)^{\dagger} = \left(XX^T\right)^{\dagger} = X\left(X^{T}X\right)^{-2} X^{T}.
    \end{equation}
    Since $x^{\star}-\widetilde{x} = X \delta$, where $\delta$ is a standard multivariate normal random variable, we can find that test statistic
    \begin{equation}
        \left\|x^{\star} - \widetilde{x}\right\|_{\left(YGY^{T}\right)^{\dagger}}^{2} = \delta^TX^T X\left(X^{T}X\right)^{-2} X^{T} X \delta = \delta^T \delta
    \end{equation}
    follows $\chi^2_{n-m}$ distribution.
\end{enumerate}
\end{proof}
Note, that this result is also correct for all priors proposed in Theorem~\ref{theorem:prior_art}. This is because all methods are fully Bayesian when $W$ and $V$ do not depend on $x$. As we discuss in Section~\ref{section:When_probabilistic_projection_methods_are_sound}, this is not true for Krylov subspace methods like CG and GMRES.

Having a well-calibrated posterior probability, we turn to the choice of a prior distribution. Since with $G$, we can always perform a change of basis in ${\sf Null}\left(W^{T}A\right)$; we consider it to be fixed and describe how the rescaling of basis vectors influences an error vector.
\begin{lemma}
    \label{lemma:alignment}
    Let in addition to conditions of Theorem~\ref{theorem:main_result} columns of matrix $Y$ be orthonormal, and the exact solution be $x^{\star} = x_0 + V \delta_{1} + Y G^{1/2}\delta$, where $\delta_1, \delta$ are standard multivariate normal random variables. The choice $G = s^2 I_{p\times p}\oplus I_{\left(n-m-p\right)\times \left(n-m-p\right)}$, $s\in\mathbb{R}$ leads to $\cos\left(\theta\right) = 1\big/\left(1+\frac{n-m-p}{s^2p}z\right)$, where $\theta$ is an acute angle between the error $\widetilde{e} = x^{\star} - \widetilde{x}$ and ${\sf span}\left\{Y_{\star i}:i=1,\dots,p\right\}$; $z$ is $F$-distributed with numerator $n-m-p$ and denominator $p$ (see Figure~\ref{fig:directional_UQ} for geometric interpretation).
\end{lemma}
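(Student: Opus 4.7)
The approach is to reduce the error to a purely $Y$-supported random vector, exploit the orthogonality built into the partitioned $Y$ to split the component along ${\sf span}\{Y_{\star i}:i=1,\dots,p\}$ from its orthogonal complement, and recognise the resulting ratio of squared norms as an $F$-distributed variable.

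First I would apply Theorem~\ref{theorem:perfect_uncertainty_calibration} (part~1): when the true solution has the form $x^{\star} = x_0 + V\delta_1 + YG^{1/2}\delta$, the projection method returns $\widetilde{x} = x_0 + V\delta_1$, so $\widetilde{e} = YG^{1/2}\delta$ and the $V$-component drops out entirely. Writing $Y = [Y_1\; Y_2]$ with $Y_1\in\mathbb{R}^{n\times p}$ collecting the columns aligned with the $s^{2}I_{p}$ block of $G$, and $\delta = (\delta_p,\delta_{n-m-p})$ split conformably, the block form of $G$ yields $\widetilde{e} = sY_1\delta_p + Y_2\delta_{n-m-p}$.

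Orthonormality of the columns of $Y$ implies that $Y_1 Y_1^{T}$ is the orthogonal projector onto ${\sf span}\{Y_{\star i}:i=1,\dots,p\}$, and the two summands above are mutually orthogonal. A direct computation then gives
\begin{equation*}
\cos^{2}\theta = \frac{\|sY_1\delta_p\|^{2}}{\|\widetilde{e}\|^{2}} = \frac{s^{2}\|\delta_p\|^{2}}{s^{2}\|\delta_p\|^{2} + \|\delta_{n-m-p}\|^{2}} = \frac{1}{1 + \|\delta_{n-m-p}\|^{2}/\bigl(s^{2}\|\delta_p\|^{2}\bigr)}.
\end{equation*}
Independence of $\delta_p$ and $\delta_{n-m-p}$ together with $\|\delta_p\|^{2}\sim\chi^{2}_{p}$ and $\|\delta_{n-m-p}\|^{2}\sim\chi^{2}_{n-m-p}$ implies that $z = \bigl(\|\delta_{n-m-p}\|^{2}/(n-m-p)\bigr)/\bigl(\|\delta_p\|^{2}/p\bigr)$ is $F$-distributed with numerator $n-m-p$ and denominator $p$, and substituting this into the previous display produces the claimed expression.

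There is no serious technical obstacle; once the reduction $\widetilde{e} = YG^{1/2}\delta$ is in place, the rest is bookkeeping. The only points that deserve care are keeping the orientation of the $F$-ratio correct — swapping the numerator and denominator degrees of freedom would invert $z$ and rescale the coefficient — and observing that, because $\theta$ is defined as an acute angle, taking the positive square root to pass from $\cos^{2}\theta$ to $\cos\theta$ is unambiguous.
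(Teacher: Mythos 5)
Your argument is essentially the paper's own proof: the same reduction $\widetilde{e}=YG^{1/2}\delta$, the same split of $\delta$ according to the block structure of $G$, the same orthogonal projector $\sum_{i=1}^{p}Y_{\star i}Y_{\star i}^{T}$, and the same identification of the ratio of independent $\chi^2$ variables as an $F$ variate. The one point of friction is your closing remark about passing from $\cos^2\theta$ to $\cos\theta$ by a square root: the lemma's stated formula contains no square root, because the paper (see the caption of Figure~\ref{fig:directional_UQ}) \emph{defines} $\cos\theta$ as the Rayleigh quotient $\widetilde{e}^{T}P_{\perp}\widetilde{e}\big/\widetilde{e}^{T}\widetilde{e}$ --- i.e.\ exactly the quantity you call $\cos^{2}\theta$ --- so your displayed identity already is the claimed result under the paper's convention and no square root should be taken.
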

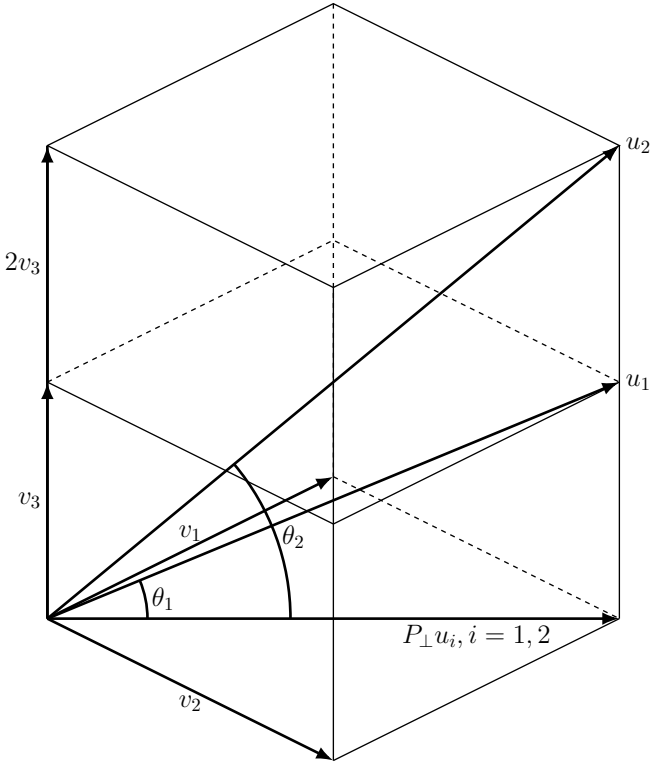
\begin{figure}[t]
    \centering
    \resizebox{0.5\textwidth}{!}{\begin{tikzpicture}
    \draw[ultra thick, -Latex] (0, 0) -- (6, 3) node[midway, above]{\LARGE $v_{1}$};
    \draw[ultra thick, -Latex] (0, 0) -- (6, -3) node[midway, below]{\LARGE $v_{2}$};
    \draw[ultra thick, -Latex] (0, 0) -- (0, 5) node[midway, left]{\LARGE $v_{3}$};
    \draw[ultra thick, -Latex] (0, 5) -- (0, 10) node[midway, left]{\LARGE $2v_{3}$};
    \draw[ultra thick] (0, 0) -- (0, 10);
    \draw[thick, dashed] (6, 3) -- (12, 0);
    \draw[thick] (6, -3) -- (12, 0);
    \draw[thick, dashed] (0, 5) -- (6, 8) -- (12, 5);
    \draw[thick] (12, 5) -- (6, 2) -- (0, 5);
    \draw[thick] (0, 10) -- (6, 13) -- (12, 10) -- (6, 7) -- (0, 10);
    \draw[thick, dashed] (6, 13) -- (6, 3);
    \draw[thick] (12, 10) -- (12, 0);
    \draw[thick] (6, 7) -- (6, -3);
    \draw[ultra thick, -Latex] (0, 0) -- (12, 5) node[right]{\LARGE $u_{1}$};
    \draw[ultra thick, -Latex] (0, 0) -- (12, 10) node[right]{\LARGE $u_{2}$};
    \draw[ultra thick, -Latex] node[below, xshift=9cm]{\LARGE $P_{\perp}u_{i},i=1,2$} (0, 0) -- (12, 0);
    \draw [ultra thick, -] (0:2.1)  arc (0:22.5:2.1) node [right,pos=0.5] {\LARGE  $\theta_1$};
    \draw [ultra thick, -] (0:5.1)  arc (0:39.8:5.1) node [right,pos=0.5] {\LARGE  $\theta_2$};
\end{tikzpicture}}
    \caption{The figure demonstrates how the acute angle $\theta_{i},~i=1,2$ between subspace spanned by $v_1$ and $v_2$ and $u_i,~i=1, 2$ depend on the vector $u_{i} = v_1 + v_2 + iv_3$. The angles can be computed as $\cos(\theta_{i}) = u_{i}^{T}P_{\perp} u_{i}\big/u_{i}^T u_{i}$. Lemma~\ref{lemma:alignment} is a probabilistic counterpart of this situation. Namely, by rescaling eigenvectors of covariance matrix one can influence the distribution of the angle between the error and a given subspace.}
    \label{fig:directional_UQ}
\end{figure}
\begin{proof}
    Since $x^{\star} = x_0 + V\delta_1 + Y G^{1/2}\delta$, where $\delta_1$ and $\delta$ are independent standard multivariate normal variables, an error $\widetilde{e}$ after the projection step \eqref{projection_method} is $Y G^{1/2}\delta$.
    Using the definition of the acute angle $\theta$ (see Figure~\ref{fig:directional_UQ}), and orthogonal projector $P_{\perp} = \sum_{i=1}^{p}Y_{\star i} Y_{\star i}^{T}$ on subspace spanned by vectors $Y_{\star i},~i=1,\dots,p$ we can show that
    \begin{equation}
        \begin{split}
        \cos(\theta)&= \frac{\widetilde{e}^{T}\left(\sum_{i=1}^{p} Y_{\star i}Y_{\star i}^{T}\right) \widetilde{e}}{\widetilde{e}^T \widetilde{e}} \\ &=\frac{s^2\sum_{i=1}^{p}\delta_{i}^{2} }{\sum_{i=p+1}^{n-m-p}\delta_{i}^{2}+s^2\sum_{i=1}^{p}\delta_{i}^{2}} = \frac{1}{1 + \frac{\chi^2_{n-m-p}}{s^2\chi^2_{p}}}
        \end{split}
    \end{equation}
    Since $z = \left(p\chi^2_{n-m-p}\right)/\left((n-m-p)\chi^2_{p}\right)$ is $F$-distributed (see Chapter~13 in \cite{MR3642449}) the proof is complete.
\end{proof}
With this result we can easily construct probabilistic bounds. For example, identity $P\left(\cos(\theta)\geq 1-\epsilon\right) = P\left(z\leq s^2p\epsilon/\left((1-\epsilon)(n-m-p)\right)\right)$ allows to choose $s$ that guaranties $\widetilde{e}$ to be located within a $p$-dimensional subspace with prescribed probability.
\section{Construction of covariance matrices}
\label{section:Construction_of_covariance_matrices}
So far, we discussed only a general form of a covariance matrix. The most straightforward way to construct it explicitly is to compute a basis for ${\sf Null} \left(W^{T}A\right)$ with SVD and choose positive semidefinite $G$ according to some criteria. This can be problematic for two reasons. First, SVD incurs additional $O\left(n m^2\right)$ floating-point operations \cite{MR1444820}. Depending on the situation, this can be manageable. The second and more serious problem is that we need to store a dense $n \times \left(n-m\right)$ matrix. Iterative methods are useful only when $A$ is sparse and large, so as a rule, we do not have the luxury to store $\left(n-m\right)$ vectors forming a basis for ${\sf Null} \left(W^{T}A\right)$. The following result resolves these issues.
\begin{theorem}
    \label{theorem:simplified_uncertainty}
    Let conditions of  Theorem~\ref{theorem:main_result} be fulfilled. For $P_1 = I - V \left(W^{T}AV\right)^{-1}W^{T}A$ the following statements are true:
    \begin{enumerate}
        \item Matrix $P_1$ is a projection operator.
        \item ${\sf Range}(P_1) = {\sf Null}\left(W^{T}A\right)$
        \item General form of covariance matrix from Lemma~\ref{lemma:existence} is $\Sigma_{0} = VV^{T} + P_1 G P_1^{T}$, $G\geq 0$.
    \end{enumerate}
\end{theorem}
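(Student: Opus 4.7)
The plan is to address the three claims in sequence, as each is short and they build on one another.

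For part 1, I would just compute $P_1^2$ directly. Expanding the product and using $(W^TAV)^{-1}(W^TAV) = I$, the cross term $V(W^TAV)^{-1}W^TA\cdot V(W^TAV)^{-1}W^TA$ collapses to a single copy of $V(W^TAV)^{-1}W^TA$, and the rest simplifies to give $P_1^2 = P_1$. Hence $P_1$ is idempotent, i.e., a (generally oblique) projection.

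For part 2, I would first establish the inclusion $\mathsf{Range}(P_1)\subseteq \mathsf{Null}(W^TA)$ by checking that $W^TA P_1 = W^TA - (W^TAV)(W^TAV)^{-1}W^TA = 0$. Equality then follows from dimension counting: the complementary projector $I - P_1 = V(W^TAV)^{-1}W^TA$ has rank exactly $m$ because $V$ is full column rank and $W^TAV$ is invertible, so $\mathsf{rank}(P_1) = n-m$; on the other hand, as in the proof of Lemma~\ref{lemma:existence}, $A^T$ invertible and $W$ of rank $m$ imply $\dim\mathsf{Null}(W^TA) = n-m$. Inclusion plus equality of dimensions gives the claim.

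For part 3, the goal is to show that the set $\{P_1 G P_1^T : G \geq 0\}$ coincides with the set of admissible $\Psi$ from Lemma~\ref{lemma:extended_prior}, that is, $\Psi \geq 0$ with $W^TA\Psi = 0$. Forward: for any $G\geq 0$, $P_1 G P_1^T$ is positive semidefinite, and $W^TA P_1 G P_1^T = 0$ by part 2. Reverse: given any admissible $\Psi$, the condition $W^TA\Psi = 0$ places every column of $\Psi$ in $\mathsf{Null}(W^TA) = \mathsf{Range}(P_1)$; since $P_1$ fixes its own range (by idempotence), $P_1 \Psi = \Psi$, and by symmetry of $\Psi$, also $\Psi P_1^T = \Psi$, whence $P_1\Psi P_1^T = \Psi$. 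So taking $G := \Psi$ realizes the given covariance. Combined with Lemma~\ref{lemma:existence} and Theorem~\ref{theorem:main_result}, this yields the stated general form of $\Sigma_0$.

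The main obstacle is really only the bookkeeping in part 3, namely verifying that the projector parametrization is surjective onto the admissible set without appealing to an explicit basis $Y$ for $\mathsf{Null}(W^TA)$; parts 1 and 2 reduce to one-line algebraic manipulations.
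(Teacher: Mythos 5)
Your proposal is correct and follows essentially the same route as the paper: idempotence by direct algebra (the paper verifies it for $I-P_1$ and infers it for $P_1$, which is equivalent to your cross-term collapse), the inclusion ${\sf Range}(P_1)\subseteq{\sf Null}(W^{T}A)$ from $W^{T}AP_1=0$, and surjectivity of the parametrization via $P_1$ acting as the identity on the admissible set. The only cosmetic differences are that the paper closes part 2 by the direct implication $W^{T}Ax=0\Rightarrow P_1x=x$ rather than your dimension count, and in part 3 it factors $\Psi=YY^{T}$ and sets $G=YY^{T}$ rather than taking $G=\Psi$ directly.
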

\begin{proof}
    \begin{enumerate}
        \item It is enough to demonstrate that $I-P_1$ is a projection operator. Indeed, if this is the case, $P_1$ is a projection operator too since $(I-P_1)^2 = I - P_1$ implies that $P_1^2 = P_1$. Using $I-P_1 = \left(W^{T}AV\right)^{-1}W^{T}A$ for $\left(I-P_1\right)^2$ we find
        \begin{equation}
        \begin{split}
            V\left(W^{T}AV\right)^{-1}W^{T}AV\left(W^{T}AV\right)^{-1}W^{T}A \\
            =  V\left(W^{T}AV\right)^{-1}W^{T}A = I - P_1,
        \end{split}
        \end{equation}
        so $I-P_1$ is a projection operator.
        \item
        It is easy to see that $W^{T}A P_1 = 0$. Indeed,
        \begin{equation}
            \begin{split}
            W^{T}A\left(I - V \left(W^{T}AV\right)^{-1}W^{T}A\right) \\
            = W^{T}A - W^{T}A = 0
            \end{split}
        \end{equation}
        From $W^{T}A P_1 = 0$ we have ${\sf Range}(P)\subseteq {\sf Null}(W^{T}A)$. On the other hand $W^{T}Ax = 0 \Rightarrow P_1x = x$, so ${\sf Null}(W^{T}A)\subseteq {\sf Range}(P)$. From two inclusions we conclude that ${\sf Range}(P)={\sf Null}(W^{T}A)$.
        \item Any $\Psi\geq 0$ from Lemma~\ref{lemma:existence} has a form $Y Y^T$ where columns of $Y$ belong to ${\sf Null}(W^{T}A)$. This fact follows from spectral decomposition of $\Psi$, $\Psi\geq 0$ and $W^{T}A \Psi = 0$. Since ${\sf Range}(P_1) = {\sf Null}\left(W^{T}A\right)$ we know that $P_1Y = Y$. This allows us to take $G = YY^T$ for which the covariance matrix reads
        \begin{equation}
            \begin{split}
            \Sigma_{0} &= VV^T + P_1Y \left(P_1 Y\right)^{T} \\ &= VV^T + YY^T = VV^T + \Psi.
            \end{split}
        \end{equation}
        So with the appropriate choice of $G$ we can reproduce arbitrary covariance matrix from Lemma~\ref{lemma:existence}.
    \end{enumerate}
\end{proof}
We would like to point out that it is natural to use projector $P_1$ to quantify uncertainty. It is known from general theory of iterative methods (see Chapter~2 from \cite{MR3495481}) that linear iteration of the form $x^{(n+1)} = x^{(n)} + N[A]r^{(n)}\equiv M[A]x^{(n)} + N[A]b$, where $M[A]$ and $N[A]$ are matrices depending on $A$ such that the consistency condition $M[A] + N[A]A = I$ holds. In our case $N[A] = V\left(W^{T}AV\right)^{-1}W^{T}$ approximates $A^{-1}$, and $P_1 = M[A]  = I - N[A]A$ quantifies how well this is done.

To compute projection operator $P_1$ from Theorem~\ref{theorem:simplified_uncertainty}, one need not perform more complex operations that are required for projection method itself: matrices $W$ and $V$ are available as a byproduct of Arnoldi or Lanczos processes and $W^{T}AV$ usually has a special form (Hessenberg or tridiagonal). Moreover, to store $P$, we need to keep matrices $W$, $V$, and $\left(W^{T}AV\right)^{-1}$, that is $2nm + m^2$ floating-point numbers in the worst case, which is much better than $n^2-mn$ in situations when $m\ll n$.

Covariance matrix in Theorem~\ref{theorem:simplified_uncertainty} contains projection operator $P_1$ which is not orthogonal. Later we will see that orthogonal projectors are more suitable in the context of statistical inference, so we formulate a result similar to Theorem~\ref{theorem:simplified_uncertainty} but with an orthogonal projector.
\begin{theorem}
\label{theorem:orthogonal_projector}
    Let $P_2 = Y \left(Y^T Y\right)^{-1} Y^T$, where columns of $Y$ are $k=n-m$ linearly independent vectors from ${\sf Null}\left(W^T A\right)$. If $W$ and $V$ result in a well-defined projection method, the following is true:
    \begin{enumerate}
        \item $P_2$ is an orthogonal projector on ${\sf Null}\left(W^T A\right)$.
        \item Covariance matrix $\Sigma_{0} = VV^T + P_{2}GP_{2}^T$, $G\geq 0$, leads to a posterior $\mathcal{N}(\cdot|\widetilde{x}, P_2GP_2^T)$, under linear observations and conditions defined in Theorem~\ref{theorem:main_result}.
    \end{enumerate}
\end{theorem}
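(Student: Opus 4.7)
My plan is to verify both parts by direct computation, leveraging Lemmas~\ref{lemma:existence} and~\ref{lemma:extended_prior} already established in the paper.

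For part~1, I would first observe that $Y^T Y$ is invertible because the $k = n - m$ columns of $Y$ are linearly independent, so $P_2$ is well-defined. Symmetry $P_2^T = P_2$ is immediate from the formula, and the cancellation $(Y^T Y)(Y^T Y)^{-1} = I$ inside $P_2^2$ gives idempotence, together making $P_2$ an orthogonal projector. To identify its range, the inclusion ${\sf Range}(P_2) \subseteq {\sf Range}(Y) \subseteq {\sf Null}(W^T A)$ is immediate from the construction of $Y$; conversely, any $Yc$ satisfies $P_2(Yc) = Y(Y^T Y)^{-1}(Y^T Y)c = Yc$, giving ${\sf Range}(Y) \subseteq {\sf Range}(P_2)$. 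That ${\sf Range}(Y)$ is actually all of ${\sf Null}(W^T A)$, and not a proper subspace, follows from the dimension count in the proof of Lemma~\ref{lemma:existence}: $\dim {\sf Null}(W^T A) = n - m = k$ matches the number of independent columns of $Y$.

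For part~2, I would apply Lemma~\ref{lemma:extended_prior} with $\Psi = P_2 G P_2^T$. This requires only two verifications. Positive semidefiniteness reduces to $x^T \Psi x = (P_2^T x)^T G (P_2^T x) \geq 0$ whenever $G \geq 0$. The orthogonality condition $W^T A \Psi = 0$ follows from part~1: since $P_2$ maps into ${\sf Null}(W^T A)$, we have $W^T A P_2 = 0$, hence $W^T A \Psi = (W^T A P_2)\,G\, P_2^T = 0$. Lemma~\ref{lemma:extended_prior} then delivers the posterior $\mathcal{N}(\cdot\,|\,\widetilde{x},\, P_2 G P_2^T)$ exactly as claimed, with mean equal to the output of the projection method \eqref{projection_method}.

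I do not anticipate a real obstacle; the statement is essentially a recasting of Theorem~\ref{theorem:simplified_uncertainty}, in which the oblique projector $P_1$ is replaced by the orthogonal projector $P_2$ onto the same nullspace. The only step that requires a genuine dimension count, rather than just a set inclusion, is upgrading ${\sf Range}(Y) \subseteq {\sf Null}(W^T A)$ to equality, and this is already secured by Lemma~\ref{lemma:existence}.
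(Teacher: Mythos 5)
Your proposal is correct and follows essentially the same route as the paper: direct algebraic verification that $P_2$ is symmetric and idempotent with range ${\sf Null}\left(W^T A\right)$, followed by an appeal to Lemma~\ref{lemma:extended_prior} via the condition $W^T A P_2 = 0$. You are slightly more explicit than the paper in two places (the dimension count identifying ${\sf Range}(Y)$ with ${\sf Null}\left(W^T A\right)$, and the verification that $\Psi = P_2 G P_2^T \geq 0$), but these are elaborations of the same argument rather than a different approach.
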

\begin{proof}
    \begin{enumerate}
        \item $P^2_2 = Y \left(Y^T Y\right)^{-1} Y^TY \left(Y^T Y\right)^{-1} Y^T = P_2$, so $P_2$ is a projection operator. Next, $P_2^{T} = P_2$ so $P_2$ is an orthogonal projector. Finally, ${\sf Range}(P_2) = {\sf Null}\left(W^T A\right)$ by definition of $Y$.
        \item From ${\sf Range}(P) = {\sf Null}(W^{T}A)$ it follows that $W^{T}A P_2 = 0$, and $\Sigma_{0}A^{T}W = V V^{T}A^{T}W$. Since the proof of Lemma~\ref{lemma:extended_prior} relies only on the fact that $W^{T}A \Psi = 0$, we can substitute $\Psi$ by $P_2$ and obtain the same result. With that we conclude that the posterior distribution has a probability density $\mathcal{N}(\cdot|\widetilde{x}, P_2GP_2^T)$.
    \end{enumerate}
\end{proof}

Note that to compute $P_2$ one need no explicitly form the orthonormal basis for ${\sf Null}\left(W^T A\right)$, which is not feasible in typical practical situations when $n\gg 1$ and $m\ll n$. In place of that, one can use $\widetilde{Y}\in\mathbb{R}^{n \times m}$ with columns such that ${\sf Range}\left(\widetilde{Y}\right) = {\sf Range}\left(A^T W\right)$. Since ${\sf Range}\left(A^T W\right) \perp {\sf Null}\left(W^T A\right)$ we conclude that $P_2 = I - \widetilde{Y}\left(\widetilde{Y}^{T}\widetilde{Y}\right)^{-1}\widetilde{Y}^{T}$. Unlike $Y$, computation of $\widetilde{Y}$ is feasible. Moreover, for some projection method $\widetilde{Y}$ can be available as a byproduct of the method itself. For example, vectors from ${\sf Range}\left(A^T W\right)$ are available in case of Lanczos biorthoganolization (see \cite[Subsection 7.2]{saad2003iterative}). These vectors are discarded when only the solution of the linear system is of interest, however as we see from Theorem~\ref{theorem:orthogonal_projector} they can be used to construct a covariance matrix. Conjugate gradient iteration provides the other example. In this case $A>0$ and $W = V$, so the residuals can be used to form orthonormal basis for ${\sf Range}\left(A V\right)$.

\section{When probabilistic projection methods are sound}
\label{section:When_probabilistic_projection_methods_are_sound}
The validity of Theorem~\ref{theorem:prior_art} and Bayesian conjugate gradient Method proposed in \cite{cockayne2018bayesian}, as well as all results of the present paper, depend on the assumption that the joint distribution of $x$ and $y_{m}$ is a multivariate normal. This fact can be shown via computation of characteristic function if search directions $S_{m}$ and prior covariance matrix $\Sigma_{0}$ are independent of $x$. When Krylov subspace $\mathcal{K}_{m}\left(A, b\right)$ is used to build $S_{m}$, as it is done in almost all Krylov subspace methods, information $y_{m}$ becomes a nonlinear function of $x$, and the joint distribution of $y_{m}$ and $x$ is not a multivariate normal. This implies that algorithms based on Theorem~\ref{theorem:prior_art} and Bayesian conjugate gradient cannot stand as probabilistic Krylov subspace methods. Moreover, even when $S_{m}$ is unrelated to $x$, as in the Lanczos biorthogonalisation algorithm, $V$, that still depends on $x$, is not allowed to appear in prior covariance matrix $\Sigma_0$. These restrictions render probabilistic Krylov projection methods incorrect. We can think of three possible solutions to this problem.

The first solution is to focus on projection methods that do not use $\mathcal{K}_{m}\left(A, b\right)$ to construct approximate solution. For example, a two-grid operator in the Algebraic Multigrid (AMG) framework has the same form as a projection method \eqref{projection_method}, given $V$ is a matrix of interpolation operator and $W$ is a matrix of restriction operator. The same is true for Gauss-Seidel method, which is equivalent to the sequence of projection steps with $\mathcal{L}=\mathcal{K}={\sf span}\left\{e_{i}\right\}$ repeated for $i=1,\dots,n$ until convergence.

Another way is to use Arnoldi or Lancsoz processes to build basis in $\mathcal{K}_{m}\left(A, \rho\right)$, where $\rho$ is independent of $b$. For this kind of projection processes, probabilistic methods are rigorously justified. On the downside, there are few theoretical results and estimations available from numerical linear algebra. One can also expect a deterioration of the convergence rate. In addition to that, memory-friendly algorithms like Conjugate Gradient should be rederived (if this is possible at all), because they explicitly rely on the fact that the first search direction is parallel to an initial residual vector.

Finally, it is possible to apply the results obtained under the assumption that $W$ and $V$ are independent of $x$ to actual Krylov subspace methods and try to tune prior probability to get well-calibrated uncertainty. We consider this option in the next section.

\section{Uncertainty calibration for Krylov subspace methods}
\label{section:Uncertainty_calibration_for_Krylov_subspace_methods}
For Krylov subspace methods, uncertainty is poorly calibrated. In the present section we put forward a statistical procedure that allows us to adjust a single scalar parameter in such a way, that $Z-$statistic as well as $S-$statistic (to be defined) are well-calibrated.

Before the main results we prove the following supplementary lemma.
\begin{lemma}
    \label{lemma:hierarchical_modelling}
    Let $p(s|\alpha, \beta) = {\sf IG}\left(s|\alpha, \beta\right)$ be the inverse-gamma distribution, and $p(x|s, \Sigma, \mu) = \mathcal{N}\left(x|\mu, s\Sigma\right)$, $\Sigma\geq 0$, then
    \begin{equation}
        \begin{split}
            p(x|\Sigma, \mu, \alpha, \beta) = \int dx~p(x|s, \Sigma, \mu)p(s|\alpha, \beta) \\
            = {\sf St}_{2\alpha}\left(x\left|\mu, \frac{\beta}{\alpha}\Sigma\right.\right).
        \end{split}
    \end{equation}
\end{lemma}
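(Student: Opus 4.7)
The plan is to verify the identity by direct computation of the marginalization integral, recognizing the integrand in $s$ as an unnormalized inverse-gamma kernel, and then matching constants against the Student's $t$ density defined in equation (\ref{Student}).

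First I would unpack the two densities using the definitions in equations (\ref{multivariate_normal}) and the standard inverse-gamma form. Writing $Q = (x-\mu)^{T}\Sigma^{\dagger}(x-\mu)$ and exploiting the fact that for $s>0$ one has $(s\Sigma)^{\dagger} = s^{-1}\Sigma^{\dagger}$ together with $\det(sD) = s^{k}\det D$ (where $k = {\sf rank}(\Sigma)$ and $D$ holds the nonzero eigenvalues), the integrand becomes
\begin{equation*}
    \frac{\beta^{\alpha}\,{\sf Ind}[U^{T}x\neq U^{T}\mu]}{\Gamma(\alpha)(2\pi)^{k/2}\sqrt{\det D}}\, s^{-(\alpha + k/2 + 1)}\exp\!\left(-\tfrac{\beta + Q/2}{s}\right).
\end{equation*}
The indicator factor is independent of $s$ and carries through; importantly it prevents $Q$ from being $0/0$ nonsense since on its support $Q$ is a finite nonnegative number.

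Next I would observe that the $s$-dependent part is precisely the unnormalized density of ${\sf IG}(s|\alpha + k/2,\, \beta + Q/2)$. Hence
\begin{equation*}
    \int_{0}^{\infty} s^{-(\alpha+k/2+1)}\exp\!\left(-\tfrac{\beta+Q/2}{s}\right) ds = \frac{\Gamma(\alpha+k/2)}{(\beta+Q/2)^{\alpha+k/2}}.
\end{equation*}
Pulling this into the prefactor gives
\begin{equation*}
    p(x|\Sigma,\mu,\alpha,\beta) = \frac{\beta^{\alpha}\Gamma(\alpha+k/2)\,{\sf Ind}[U^{T}x\neq U^{T}\mu]}{\Gamma(\alpha)(2\pi)^{k/2}\sqrt{\det D}}\,(\beta + Q/2)^{-(\alpha+k/2)}.
\end{equation*}

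Finally I would rewrite $(\beta + Q/2)^{-(\alpha+k/2)} = \beta^{-(\alpha+k/2)}(1+ Q/(2\beta))^{-(\alpha+k/2)}$ and compare with the Student density (\ref{Student}) evaluated at $\nu = 2\alpha$ and scale matrix $\Sigma' = (\beta/\alpha)\Sigma$. For that scale one has $\Sigma'^{\dagger} = (\alpha/\beta)\Sigma^{\dagger}$, so $\tfrac{1}{\nu}(x-\mu)^{T}\Sigma'^{\dagger}(x-\mu) = Q/(2\beta)$, and $\det D' = (\beta/\alpha)^{k}\det D$. Substituting into the Student normalization reproduces exactly the prefactor above, finishing the identification. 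The main obstacle is purely the algebraic bookkeeping of the constants, in particular ensuring the factor $\alpha^{-k/2}$ from $(\pi\nu)^{k/2}$ cancels the factor $\alpha^{k/2}$ from $\det D'$ and that the leading $\beta^{\alpha}$ survives the rescaling; aside from this, the pseudoinverse/indicator machinery carries through because $s>0$ never changes the range of $\Sigma$.
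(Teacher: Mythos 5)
Your proposal is correct and follows essentially the same route as the paper's proof: both combine the two densities, recognize the $s$-dependent factor as an unnormalized ${\sf IG}(s\mid\alpha+k/2,\;\beta+Q/2)$ kernel that integrates to a Gamma-function ratio, and match the surviving constants to the Student density \eqref{Student} with $\nu=2\alpha$ and scale $\frac{\beta}{\alpha}\Sigma$. Your version merely carries out the constant bookkeeping more explicitly than the paper does.
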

\begin{proof}
    The result is a slight generalisation of a standard Bayesian hierarchical modelling for multivariate normal distribution \cite{bernardo2009bayesian}. Using definition of inverse-gamma distribution and probability density function of multivariate normal distribution \eqref{multivariate_normal} we obtain
    \begin{equation}
        \begin{split}
            &p(x|s, \Sigma, \mu)p(s|\alpha, \beta) = \frac{\beta^{\alpha}}{\Gamma(
            \alpha)}s^{-(\alpha+1)}\exp(-\beta\big/s) \\
            &\frac{\exp\left(-(x-\mu)^T\Sigma^{\dagger}(x-\mu)\big/(2s)\right)}{\left(2\pi s\right)^{k\big/2}\sqrt{\det D}}{\sf Ind}\left[U^Tx\neq U^{T}\mu\right] \\
            &= \frac{\beta^{\alpha}}{\Gamma(
            \alpha)} \frac{\Gamma(
            \overline{\alpha})}{\overline{\beta}^{\overline{\alpha}}}\frac{{\sf IG}\left(s|\overline{\alpha}, \overline{\beta}\right){\sf Ind}\left[U^Tx\neq U^{T}\mu\right]}{(2\pi)^{k\big/2}\sqrt{\det D}},
        \end{split}
    \end{equation}
    where $\overline{\alpha}=\alpha + k\big/2$, $\overline{\beta} = \beta + (x-\mu)^T\Sigma^{\dagger}(x-\mu)\big/2$. Probability density function ${\sf IG}\left(s|\overline{\alpha}, \overline{\beta}\right)$ disappears after integration, and it is easy to see that the remaining factors form ${\sf St}_{2\alpha}\left(x\left|\mu, \frac{\beta}{\alpha}\Sigma\right.\right)$ defined in \eqref{Student}.
\end{proof}

The first result is based on the rescaling of the full covariance matrix from Theorem~\ref{theorem:main_result} as proposed in \cite{cockayne2018bayesian}.
\begin{lemma}
    \label{lemma:cheap_UQ}
    Let conditions of Theorem~\ref{theorem:main_result} be fulfilled. For covariance matrix $\Sigma_{0} = s\left(VV^{T} + \Psi\right)$, $s>0$, $W^{T}A\Psi=0$, $\Psi\geq 0$; and prior $p(s|\alpha, \beta)={\sf IG}\left(s|\alpha, \beta\right)$ the following is true:
    \begin{enumerate}
        \item Probability density function $p\left(s|W^{T}Ax = W^{T}b\right)$ is the inverse-gamma distribution with parameters $\widetilde{\alpha} = \alpha + m/2$, $\widetilde{\beta} = \beta + \delta^{T}\delta / 2$, $\delta = \left(W^{T}AV\right)^{-1}W^{T}\left(b - Ax_{0}\right)$.
        \item Predictive distribution for $x|W^{T}Ax = W^{T}b$ is multivariate Student distribution ${\sf St}_{2\widetilde{\alpha}}\left(x|\widetilde{x}, \frac{\widetilde{\beta}}{\widetilde{\alpha}}\Psi\right)$.
    \end{enumerate}
\end{lemma}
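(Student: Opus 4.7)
The plan is to handle the two parts in sequence, starting from Bayes' rule for the scalar parameter and then applying Lemma~\ref{lemma:hierarchical_modelling} to obtain the Student predictive.

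For part (1), I would condition on $s$ and compute the conditional distribution of the observation $y_m = W^T A x$. Since $x \mid s \sim \mathcal{N}(x_0, s(VV^T + \Psi))$ and $W^T A \Psi = 0$, a direct calculation gives
\begin{equation*}
y_m \mid s \sim \mathcal{N}\bigl(W^T A x_0,\; s\, M M^T\bigr), \qquad M \equiv W^T A V,
\end{equation*}
so $M M^T$ is invertible with $(M M^T)^{-1} = M^{-T} M^{-1}$. Evaluating the density of $y_m$ at $W^T b$ therefore yields a factor proportional to $s^{-m/2}\exp\bigl(-\tfrac{1}{2s}\|M^{-1} W^T(b-Ax_0)\|^2\bigr) = s^{-m/2}\exp(-\delta^T\delta/(2s))$ with $\delta$ as defined in the statement. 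Multiplying by the inverse-gamma prior $p(s) \propto s^{-(\alpha+1)}\exp(-\beta/s)$ and collecting exponents gives a density proportional to $s^{-(\alpha + m/2 + 1)}\exp\bigl(-(\beta + \delta^T\delta/2)/s\bigr)$, which is the inverse-gamma with the claimed parameters $\widetilde\alpha, \widetilde\beta$.

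For part (2), I would use the tower property
\begin{equation*}
p\bigl(x \mid W^T A x = W^T b\bigr) = \int p\bigl(x \mid s, W^T A x = W^T b\bigr)\, p\bigl(s \mid W^T A x = W^T b\bigr)\, ds.
\end{equation*}
Conditional on $s$, the prior satisfies the hypotheses of Lemma~\ref{lemma:extended_prior} with covariance $s(VV^T+\Psi)$ and with $s\Psi$ still annihilated by $W^T A$, so the posterior given $s$ is $\mathcal{N}(x \mid \widetilde{x}, s\Psi)$; note that the mean $\widetilde{x}$ does not depend on $s$ because $s$ cancels in the expression $\Sigma_0 A^T W(W^T A \Sigma_0 A^T W)^{-1}$. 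Combined with the inverse-gamma posterior for $s$ from part (1), Lemma~\ref{lemma:hierarchical_modelling} immediately produces ${\sf St}_{2\widetilde\alpha}\bigl(x \mid \widetilde x, (\widetilde\beta/\widetilde\alpha)\Psi\bigr)$.

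The main obstacle is the bookkeeping in part (1): one must be careful that the rank-$m$ covariance $sMM^T$ leads to the correct $s$-dependence in the marginal likelihood (the factor $s^{-m/2}$), and that the quadratic form collapses cleanly to $\delta^T\delta$ via $(MM^T)^{-1} = M^{-T}M^{-1}$. Once this reduction is in place, part (2) is a one-line invocation of the hierarchical-modelling lemma.
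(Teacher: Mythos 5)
Your proposal is correct and follows essentially the same route as the paper: compute the marginal likelihood $p(y_m\mid s)=\mathcal{N}(W^TAx_0,\,s\,MM^T)$ using $W^TA\Psi=0$, exploit inverse-gamma conjugacy to read off $\widetilde\alpha,\widetilde\beta$, and then integrate the conditional posterior $\mathcal{N}(x\mid\widetilde x, s\Psi)$ against ${\sf IG}(s\mid\widetilde\alpha,\widetilde\beta)$ via Lemma~\ref{lemma:hierarchical_modelling}. The only cosmetic difference is that you justify the conditional posterior by Lemma~\ref{lemma:extended_prior} (correctly noting that $s$ cancels in the mean) where the paper points to Theorem~\ref{theorem:simplified_uncertainty}; the substance is identical.
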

\begin{proof}
    \begin{enumerate}
        \item We define random variable $z = W^{T}Ax$. Since $x = x_0 + s^{1/2}V \delta_{1} + s^{1/2}\Psi^{1/2} \delta_{2}$, where $\delta_{i},~i=1,2$ are independent standard multivariate normal random variables and $W^{T}A\Psi^{1/2} = 0$, probability density function for z reads
        \begin{equation}
             p(z) = \mathcal{N}\left(z|W^{T}A x_0,  sW^{T}AV \left(W^{T}AV\right)^T\right).
        \end{equation}
       Using definition of posterior distribution we find
        \begin{equation}
            \begin{split}
                &p\left(s|W^{T}Ax = W^{T}b\right) \propto p(z = W^{T}b) {\sf IG}(s|\alpha, \beta) \\
                &\propto s^{-m\big/2}\exp\left(-\delta^T\delta\big/(2s)\right) s^{-(\alpha+1)}\exp\left(-\beta\big/s\right),
            \end{split}
        \end{equation}
        where $\delta = \left(W^{T}AV\right)^{-1}W^{T}\left(b - Ax_{0}\right)$.
        From the last line we can identify parameters of the posterior distribution $\widetilde{\alpha} = \alpha + m/2$, $\widetilde{\beta} = \beta + \delta^{T}\delta / 2$.
        \item Predictive distribution is
        \begin{equation}
            \begin{split}
                &p\left(x\left|W^{T}Ax = W^{T}b\right.\right) \\
                &= \int ds~p\left(x\left|W^{T}Ax = W^{T}b, x_0, s\Sigma_{0}\right.\right) p(s|\alpha, \beta),
            \end{split}
        \end{equation}
        where the first factor under the integral is multivariate normal $\mathcal{N}\left(x|\widetilde{x}, s
        \Psi\right)$ (see Theorem~\ref{theorem:simplified_uncertainty}), and the second is ${\sf IG}(s|\widetilde{\alpha}, \widetilde{\beta})$.
        Using the result from Lemma~\ref{lemma:hierarchical_modelling} we obtain ${\sf St}_{2\widetilde{\alpha}}\left(x|\widetilde{x}, \frac{\widetilde{\beta}}{\widetilde{\alpha}}\Psi\right)$ as a predictive distribution.
    \end{enumerate}
\end{proof}

Lemma~\ref{lemma:cheap_UQ} is straightforward from the point of view of the implementation, because approximate solution \eqref{projection_method} is $\widetilde{x}= x_{0} + V\delta$, where $\delta = \left(W^{T}AV\right)^{-1}W^{T}\left(b - Ax_{0}\right)$, scalar $\left\|\delta\right\|_2^2$, required for uncertainty calibration, can be readily computed for arbitrary projection method. Common factor $s$ appears in Lemma~\ref{lemma:cheap_UQ} because if we take $\Sigma_{0} = VV^{T} + s\Psi$, posterior distribution for the scale $p\left(s|W^{T}Ax = W^{T}b\right)$ coincides with ${\sf IG}\left(s|\alpha, \beta\right)$, that is available information is insufficient to fix the scale. Since a scale of an error can be completely unrelated to the $L_{2}$ norm of projection of $A^{-1}b$ on $V$, additional information can be valuable to tune $s$. This is explored in the following result.
\begin{lemma}
    \label{lemma:expensive_UQ}
    Let conditions of Theorem~\ref{theorem:orthogonal_projector} be fulfilled and $G = sI$ for $s>0$, so $\Sigma_{0} = VV^T + sP_2$, the solution is a multivariate normal variable $p(x) = \mathcal{N}(x|x_0, \Sigma_{0})$. For a prior distribution $p(s|\alpha, \beta) = {\sf IG}(s|\alpha, \beta)$ and i.i.d. observations $X_{\star i},~i=1,\dots, k$ of random variable $P_1(x-x_0)$ (here $P_1$ is as in Theorem~\ref{theorem:simplified_uncertainty}) the following is true:
    \begin{enumerate}
        \item Posterior distribution of $s|X$ is ${\sf IG}(s|\widetilde{\alpha}, \widetilde{\beta})$, where $\widetilde{\alpha} = \alpha + k(n-m)\big/2$, $\widetilde{\beta} = \beta + {\sf tr}\left(X^{T}X\right)/2$.
        \item Predictive distribution of $x|W^TA x = W^Tb, X$ is multivariate Student ${\sf St}_{2\widetilde{\alpha}}\left(x\left|\widetilde{x}, \frac{\widetilde{\beta}}{\widetilde{\alpha}}P_2\right.\right)$.
    \end{enumerate}
\end{lemma}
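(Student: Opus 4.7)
The plan is to handle the two claims sequentially using the conjugacy between the inverse-gamma prior on $s$ and a Gaussian likelihood concentrated on ${\sf Range}(P_2)$, and then to invoke Lemma~\ref{lemma:hierarchical_modelling} to assemble the predictive distribution.

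First I would determine the distribution of $P_1(x - x_0)$ given $s$. Writing the prior as $x = x_0 + V\delta_1 + \sqrt{s}\,P_2\delta_2$ with independent standard normals $\delta_1, \delta_2$, the two algebraic facts I need are $P_1 V = 0$ and $P_1 P_2 = P_2$. The first is immediate from the formula for $P_1$ in Theorem~\ref{theorem:simplified_uncertainty}: $P_1 V = V - V(W^T A V)^{-1} W^T A V = 0$. The second follows because ${\sf Range}(P_2) = {\sf Null}(W^T A)$ by Theorem~\ref{theorem:orthogonal_projector}, combined with the observation inside the proof of Theorem~\ref{theorem:simplified_uncertainty} that $P_1$ acts as the identity on ${\sf Null}(W^T A)$. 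Hence $P_1(x - x_0)\mid s \sim \mathcal{N}(0, s P_2)$, with the covariance having rank $n - m$. Using the singular Gaussian density \eqref{multivariate_normal}, together with $P_2^{\dagger} = P_2$ for the orthogonal projector and $X_{\star i}\in{\sf Range}(P_2)$, so that $X_{\star i}^T P_2^{\dagger} X_{\star i} = X_{\star i}^T X_{\star i}$, the joint likelihood of the $k$ i.i.d. observations collapses to $s^{-k(n-m)/2}\exp(-{\sf tr}(X^T X)/(2s))$ up to an $s$-independent constant. Multiplying by ${\sf IG}(s\mid\alpha,\beta)\propto s^{-(\alpha+1)}\exp(-\beta/s)$ and matching exponents identifies the posterior as ${\sf IG}(s\mid\widetilde{\alpha},\widetilde{\beta})$ with $\widetilde{\alpha}=\alpha+k(n-m)/2$ and $\widetilde{\beta}=\beta+{\sf tr}(X^T X)/2$, which is claim 1.

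For claim 2 the key observation is that, under the prior, $W^T A x = W^T A x_0 + W^T A V\delta_1$ has a distribution not depending on $s$, since $W^T A P_2 = 0$. Consequently the observation $W^T A x = W^T b$ carries no information about $s$, giving $p(s\mid W^T A x = W^T b, X) = p(s\mid X) = {\sf IG}(s\mid\widetilde{\alpha},\widetilde{\beta})$; moreover, for fixed $s$ the samples $X$ are independent of the inferred $x$. For such fixed $s$, Theorem~\ref{theorem:orthogonal_projector} applied with $G = s I$ yields $p(x\mid W^T A x = W^T b, s) = \mathcal{N}(x\mid\widetilde{x}, s P_2)$. Marginalising $s$ out through Lemma~\ref{lemma:hierarchical_modelling} with $\Sigma = P_2$ and $\mu = \widetilde{x}$ then delivers the announced ${\sf St}_{2\widetilde{\alpha}}(x\mid\widetilde{x}, (\widetilde{\beta}/\widetilde{\alpha}) P_2)$.

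The main obstacle I anticipate is the careful handling of the singular Gaussian density on ${\sf Range}(P_2)$: one must verify that the indicator factor in \eqref{multivariate_normal}, the pseudoinverse $(sP_2)^{\dagger} = P_2/s$, and the determinant of the nonzero block of $sP_2$ combine so that the exponent reduces exactly to ${\sf tr}(X^T X)/(2s)$, and that Lemma~\ref{lemma:hierarchical_modelling} applies verbatim on the reduced rank-$(n-m)$ subspace rather than on the full $\mathbb{R}^{n}$. Once that bookkeeping is confirmed, both claims reduce to conjugate-prior algebra.
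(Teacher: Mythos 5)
Your proposal is correct and follows essentially the same route as the paper's own proof: the same three facts ($P_1V=0$, $P_1P_2=P_2$, and the decomposition of $x$ into a ${\sf Range}(V)$ part plus a $\sqrt{s}\,P_2$ part), the same inverse-gamma conjugacy computation using $P_2^{\dagger}=P_2$ and $X_{\star i}\in{\sf Range}(P_2)$, and the same appeal to Lemma~\ref{lemma:hierarchical_modelling} for the predictive distribution. The only differences are in your favour: you explicitly justify that the linear observation $W^{T}Ax=W^{T}b$ is uninformative about $s$ (via $W^{T}AP_2=0$), a step the paper leaves implicit, and your decomposition $x = x_0 + V\delta_1 + \sqrt{s}\,P_2\delta_2$ is the one actually consistent with $\Sigma_0 = VV^{T} + sP_2$ as stated in the lemma.
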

\begin{proof}
    \begin{enumerate}
        \item We define random variable $z = P_1(x-x_0)$. To find probability density function of $z$ we use three facts. First, $P_1 V = 0$, which follows from definition of $P_1$. Second, because ${\sf Range}(P_2) = {\sf Range}(P_1)$, we conclude that $P_1 P_2 = P_2$. Finally, $x = x_0 + s^{1/2}V \delta_{1} + s^{1/2}P_2 \delta_{2}$, where $\delta_{i},~i=1,2$ are independent standard multivariate normal distributions. Using these three facts we find $p(z) = \mathcal{N}\left(z|0, s P_2\right)$. Now, it is easy to find a posterior distribution
        \begin{equation}
            \begin{split}
                & p(s|X) \propto \left(\prod_{i=1}^{k}p\left(z_{i}=X_{\star i}\right)\right)p(s|\alpha, \beta) \propto \exp(-\beta\big/s)\\
                & s^{-(\alpha+1)}s^{-k(n-m)/2} \exp\left(-\sum_{i=1}^{k}X_{\star i}^T P_{2}^{\dagger}X_{\star i}\big/(2s)\right).
            \end{split}
        \end{equation}
        Because $P_2$ is orthogonal projector $P_2^{\dagger} = P_2$. In addition to that, $X_{\star i}$ belongs to ${\sf Range}(P_1)$, so each term of the quadratic form simplifies $X_{\star i}^T P_{2}^{\dagger}X_{\star i} = X_{\star i}^T X_{\star i}$. Using the definition of inverse-gamma distribution we can identify new parameters $\widetilde{\alpha} = \alpha + k(n-m)\big/2$, $\widetilde{\beta} = \beta + {\sf tr}\left(X^{T}X\right)/2$.
        \item Predictive distribution is
        \begin{equation}
            \begin{split}
                &p\left(x\left|W^{T}Ax = W^{T}b, X\right.\right) \\
                &= \int ds~p\left(x\left|W^{T}Ax = W^{T}b, x_0, s\Sigma_{0}\right.\right) p(s|X),
            \end{split}
        \end{equation}
        where the first factor under the integral is multivariate normal $\mathcal{N}\left(x|\widetilde{x}, s P_2\right)$ (this follows from Theorem~\ref{theorem:orthogonal_projector} with $G = sI$), and the second is ${\sf IG}\left(s|\widetilde{\alpha}, \widetilde{\beta}\right)$.
        Using the result from Lemma~\ref{lemma:hierarchical_modelling} we confirm that the predictive distribution is ${\sf St}_{2\widetilde{\alpha}}\left(x|\widetilde{x},\frac{\widetilde{\beta}}{\widetilde{\alpha}}P_2\right)$.
    \end{enumerate}
\end{proof}

The reason why we take $P_1x$ as an additional observation to fix the scale is that an exact solution has a representation $x^{\star} = (I-P_1)x^{\star} + P_1x^{\star}$. If $x_0=0$ the first term $(I-P_1)x^{\star} = \widetilde{x}$, so $P_1x^{\star}$ is an error. To collect independent sample $x_{P}$ we need to run the same projection method second time, starting from a sample $x^{\star}$ from a prior distribution that we presume to be available. As a result, application of Lemma~\ref{lemma:expensive_UQ} doubles (for $k=1$) numerical costs of any projection method. This is summarized in Algorithm~\ref{algorithm:UQ_calibration}.

\begin{algorithm}
    \caption{Uncertainty calibration.}
    \label{algorithm:UQ_calibration}
    \begin{algorithmic}[1]
        \STATE \textbf{Input:} distributions for exact solution $p(x^{\star})$, $x^{\star}\in\mathbb{R}^{n}$; a projection method $V, W \leftarrow {\sf Proj}\left(A, b, m\right)$; a number of search directions $m$; parameters of inverse-gamma distribution $\alpha, \beta$; a number of observations $k$; ${\sf statistic}$ either $S$ or $Z$.
        \STATE \textbf{Output:} modified parameters of inverse-gamma distribution $\widetilde{\alpha}, \widetilde{\beta}$.
        \\\hrulefill
        \STATE $\widetilde{\alpha} = \alpha + k(n-m)\big/2$
        \STATE $\widetilde{\beta} = \beta$
        \FOR{$i=1:k$}
            \STATE $x^{\star}\sim p(x^{\star})$
            \STATE $b = Ax^{\star}$
            \STATE $V, W \leftarrow  {\sf Proj}\left(A, b, m\right)$
            \STATE $x^{\star} \leftarrow \left(I - V \left(W^{T}AV\right)^{-1}W^{T}A\right)x^{\star}$
            \IF{${\sf statistic} = Z$}
                \STATE $\delta = \left(x^{\star}\right)^T \left(x^{\star}\right)$
            \ENDIF
            \IF{${\sf statistic} = S$}
                \STATE $\delta = \left(x^{\star}\right)^T A\left(x^{\star}\right)$
            \ENDIF
            \STATE $\widetilde{\beta} = \widetilde{\beta} + \delta/2$
        \ENDFOR
    \end{algorithmic}
\end{algorithm}

Note, that prior from Lemma~\ref{lemma:expensive_UQ} leads to simple form of $Z-$statistic $Z(x^{\star}) = \left\|x^{\star} - x_{m}\right\|_{\Sigma_{m}^{\dagger}}$, where $x^{\star} = x_0 + V\delta_1 + s^{1\big/2}P_2\delta_2$ is an exact solution, $\Sigma_{m}^{\dagger}$ and $x_{m}$ are posterior covariance matrix and posterior mean vector respectively and $\delta_{i},~i=1,2$ are standard multivariate normal random variables. Indeed, because $P_2$ is an orthogonal projector $P_{2}^{\dagger}=P_{2}$. Moreover, an error $x^{\star}-x_{m}$ belongs to ${\sf Range}(P_2) = {\sf Range}(P_1)$ which follows from the fact that $x^{\star} - x_{m} = P_1(x^{\star} - x_{0})$. So we can conclude that test statistic is simply a squared $L_2$ norm of the error $\left\|x^{\star}-x_{m}\right\|_2^2$. In light of this observation, Algorithm~\ref{algorithm:UQ_calibration} simply samples an error from a known $x^{\star}$ and use its squared $L_2$ norm to estimate an error for a given right-hand side $b$ for which the exact solution is unknown.

Both Lemma~\ref{lemma:cheap_UQ} and Lemma~\ref{lemma:expensive_UQ} are designed for test $Z-$statistic. Recently \cite{reid2020probabilistic} propose a different test statistic $S(x) = \left(x-x_{m}\right)A\left(x-x_{m}\right)$, where $x$ is drawn from a posterior distribution given linear observations as in Theorem~\ref{theorem:main_result}. In what is following we call this random variable $S-$statistic. To calibrate the scale for $S-$statistic we use the following result.

\begin{lemma}
    \label{lemma:covariance_for_Reid}
    Let $A>0$, $W=V$, columns of $Y$ in Theorem~\ref{theorem:main_result} are $A-$orthogonal, i.e., $Y^TAY = I$ and $G=sI,~s>0$. Let $Z_{\star i}, i=a,\dots, k$ be a set of i.i.d. observations of random variable $A^{1/2}P_1(x-x_0)$ (here $P_1$ is as in Theorem~\ref{theorem:simplified_uncertainty}). For the prior distribution $p(s)={\sf IG}(s|\alpha, \beta)$ under condition of Theorem~\ref{theorem:main_result} the following is true:
    \begin{enumerate}
        \item Posterior distribution of $s|Z$ is ${\sf IG}\left(s|\widetilde{\alpha}, \widetilde{\beta}\right)$, $\widetilde{\alpha}=\alpha + k(n-m)\big/2$, $\widetilde{\beta}=\beta + {\sf tr}\left(Z^T Z\right)\big/2$.
        \item Predictive distribution of $x|W^TA x = W^Tb, Z$ is multivariate Student ${\sf St}_{2\widetilde{\alpha}}\left(x\left|\widetilde{x}, \frac{\widetilde{\beta}}{\widetilde{\alpha}}YY^T\right.\right)$.
    \end{enumerate}
\end{lemma}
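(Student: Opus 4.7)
The plan is to mirror the structure of Lemma~\ref{lemma:expensive_UQ}, with the $A$-orthogonality $Y^TAY = I$ taking over the role that Euclidean orthonormality of $Y$ played there. First I would identify the distribution of the observed quantity $z = A^{1/2}P_1(x-x_0)$. Representing $x = x_0 + V\delta_1 + s^{1/2}Y\delta_2$ with independent standard normal $\delta_1,\delta_2$, I would use $P_1 V = 0$ (immediate from the definition of $P_1$) together with $P_1 Y = Y$ (columns of $Y$ lie in ${\sf Null}(W^TA) = {\sf Range}(P_1)$ by Theorem~\ref{theorem:simplified_uncertainty}) to get $z = s^{1/2}M\delta_2$ for $M = A^{1/2}Y$. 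The hypothesis $Y^TAY = I$ then reads $M^TM = I$, so $MM^T$ is an orthogonal projector of rank $n-m$, and $z \sim \mathcal{N}(0, sMM^T)$.

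The key algebraic step, replacing the use of $P_2^\dagger = P_2$ in Lemma~\ref{lemma:expensive_UQ}, is that $(sMM^T)^\dagger = MM^T/s$ and every realisation of $z$ lies in ${\sf Range}(M)$, so the quadratic form in the density collapses to $z^T(sMM^T)^\dagger z = z^T z / s$. Consequently the joint likelihood satisfies
\begin{equation*}
\prod_{i=1}^{k} p(z = Z_{\star i}) \propto s^{-k(n-m)/2}\exp\!\left(-\tfrac{1}{2s}\,{\sf tr}(Z^T Z)\right).
\end{equation*}
Multiplying by the inverse-gamma prior and matching the kernel against the inverse-gamma density gives $\widetilde\alpha = \alpha + k(n-m)/2$ and $\widetilde\beta = \beta + {\sf tr}(Z^T Z)/2$, which is part~1.

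For part~2, I would note that at fixed $s$ the posterior $p(x \mid W^TAx = W^Tb, s)$ equals $\mathcal{N}(x \mid \widetilde{x}, sYY^T)$ by Theorem~\ref{theorem:main_result} applied to $\Sigma_0 = VV^T + sYY^T$. Since $Z$ is conditionally independent of the linear observation $W^TAx = W^Tb$ given $s$, marginalising over $s$ with $p(s \mid Z) = {\sf IG}(s \mid \widetilde\alpha, \widetilde\beta)$ from part~1 is exactly the hierarchical integral of Lemma~\ref{lemma:hierarchical_modelling} with $\Sigma = YY^T$ and $\mu = \widetilde{x}$. This yields the predictive density ${\sf St}_{2\widetilde\alpha}(x \mid \widetilde{x}, (\widetilde\beta/\widetilde\alpha)YY^T)$.

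The main obstacle, and the only real calculation beyond what has already been carried out in the earlier lemmas, is the reduction of $(sA^{1/2}YY^TA^{1/2})^\dagger$ to $A^{1/2}YY^TA^{1/2}/s$ at the evaluation points $Z_{\star i}$. This requires not only $M^TM = I$ but also that each $Z_{\star i}$ lies in ${\sf Range}(M) = A^{1/2}\,{\sf Range}(Y)$, which holds by construction because $Z_{\star i}$ is a sample of $A^{1/2}P_1(x-x_0)$ and ${\sf Range}(P_1) = {\sf Range}(Y)$ (both $(n-m)$-dimensional subspaces equal to ${\sf Null}(W^TA)$). Once that is verified the rest is standard Bayesian conjugacy already executed in Lemmas~\ref{lemma:cheap_UQ}, \ref{lemma:expensive_UQ}, and~\ref{lemma:hierarchical_modelling}.
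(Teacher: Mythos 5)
Your proposal is correct and follows essentially the same route as the paper's proof: the same decomposition $x = x_0 + V\delta_1 + s^{1/2}Y\delta_2$ with $P_1V=0$ and $P_1Y=Y$, the same identification of $A^{1/2}YY^TA^{1/2}$ as an orthogonal projector via $Y^TAY=I$ to collapse the quadratic form to $Z_{\star i}^TZ_{\star i}$, and the same appeal to Theorem~\ref{theorem:main_result} and Lemma~\ref{lemma:hierarchical_modelling} for the predictive Student distribution.
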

\begin{proof}
    \begin{enumerate}
        \item We define random variable $z = A^{1/2}P_1(x-x_0)$. To find probability density function of $z$ we use three facts. First, $P_1 V = 0$, which follows from definition of $P_1$. Second, because ${\sf Range}(P_1) = {\sf Range}(Y)$, we conclude that $P_1 Y = Y$. Finally, $x = x_0 + V \delta_{1} + s^{1/2}Y \delta_{2}$, where $\delta_{i},~i=1,2$ are independent standard multivariate normal random variables. Using these three facts we find a probability density function $p(z) = \mathcal{N}\left(z|0, s A^{1/2}YY^{T}A^{1/2}\right)$. It is easy to see that $P_3 = A^{1/2}YY^{T}A^{1/2}$ is an orthogonal projector. Indeed, from $A-$orthogonality we conclude that
        \begin{equation}
            P_3^2 = A^{1/2}YY^{T}AYY^{T}A^{1/2} = P_3.
        \end{equation}
        The orthogonality $P_3^T = P_3$ follows from $A^T=A$. Now, it is easy to find a posterior distribution
        \begin{equation}
            \begin{split}
                & p(s|Z) \propto \left(\prod_{i=1}^{k}p\left(z_{i}=Z_{\star i}\right)\right)p(s|\alpha, \beta) \propto \exp(-\beta\big/s)\\
                & s^{-(\alpha+1)}s^{-k(n-m)/2} \exp\left(-\sum_{i=1}^{k}Z_{\star i}^T P_{3}^{\dagger}Z_{\star i}\big/(2s)\right).
            \end{split}
        \end{equation}
        Using that $P_{3}^{\dagger} = P_{3}$ ($P_3$ is an orthogonal projector) and that $z \in {\sf Range}(A^{1/2}Y)$ (this follows from ${\sf Range}(P_1)={\sf Range}(Y)$) we simplify quadratic form $Z_{\star i}^T P_{3}^{\dagger}Z_{\star i} = Z_{\star i}^T Z_{\star i}$. Using the definition of inverse-gamma distribution we can identify new parameters $\widetilde{\alpha}=\alpha + k(n-m)\big/2$, $\widetilde{\beta}=\beta + {\sf tr}\left(Z^T Z\right)\big/2$.
        \item Predictive distribution is
        \begin{equation}
            \begin{split}
                &p\left(x\left|W^{T}Ax = W^{T}b, Z\right.\right) \\
                &= \int ds~p\left(x\left|W^{T}Ax = W^{T}b, x_0, s\Sigma_{0}\right.\right) p(s|Z),
            \end{split}
        \end{equation}
        where the first factor under the integral is multivariate normal $\mathcal{N}\left(x|\widetilde{x}, s YY^T\right)$ (this follows from Theorem~\ref{theorem:main_result} with $G = sI$), and the second is ${\sf IG}\left(s|\widetilde{\alpha}, \widetilde{\beta}\right)$.
        Using the result from Lemma~\ref{lemma:hierarchical_modelling} we confirm that the predictive distribution is ${\sf St}_{2\widetilde{\alpha}}\left(x|\widetilde{x},\frac{\widetilde{\beta}}{\widetilde{\alpha}}YY^T\right)$.
    \end{enumerate}
\end{proof}

The uncertainty calibration is summarized in Algorithm~\ref{algorithm:UQ_calibration}.
As explained in the next result, the covariance matrix from Lemma~\ref{lemma:covariance_for_Reid} leads to simple $S-$statistic.
\begin{lemma}
    \label{lemma:simplified_S_statistic}
    Under conditions of Lemma~\ref{lemma:covariance_for_Reid} distribution of $S(x) = (x - \widetilde{x})A(x - \widetilde{x})$ is the same as distribution of $s\chi_{n-m}^2$.
\end{lemma}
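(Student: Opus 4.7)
The plan is to carry out a one-line substitution: represent $x-\widetilde{x}$ explicitly using the posterior structure already established, then invoke the $A$-orthogonality of the columns of $Y$ to collapse the quadratic form $(x-\widetilde{x})^T A (x-\widetilde{x})$ to a plain sum of squares of independent standard normals.

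First, I would recall the posterior covariance. By Theorem~\ref{theorem:main_result} applied with $G = sI$ (and with $\Psi = YY^T$ as in Lemma~\ref{lemma:covariance_for_Reid}), the conditional distribution $p(x\,|\,W^T Ax = W^T b)$ is $\mathcal{N}(\widetilde{x},\, s YY^T)$. Hence I can write
\begin{equation*}
    x - \widetilde{x} = s^{1/2} Y \delta, \qquad \delta \sim \mathcal{N}(0, I_{n-m}),
\end{equation*}
since by construction $Y \in \mathbb{R}^{n\times(n-m)}$ has linearly independent columns and $YY^T$ is the factorisation used to generate the Gaussian on ${\sf Range}(Y) = {\sf Null}(V^T A)$.

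Next, I would substitute this representation into $S(x)$ and use the hypothesis $Y^T A Y = I$:
\begin{equation*}
    S(x) = (x-\widetilde{x})^T A (x-\widetilde{x}) = s\, \delta^T Y^T A Y \delta = s\, \delta^T \delta.
\end{equation*}
Since $\delta$ is a standard multivariate normal in $\mathbb{R}^{n-m}$, the scalar $\delta^T \delta$ follows a $\chi^2_{n-m}$ distribution by definition, and therefore $S(x) \sim s\,\chi^2_{n-m}$, as claimed.

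There is essentially no obstacle here: the lemma is designed exactly so that the $A$-weighted norm on ${\sf Range}(Y)$ reduces to the Euclidean norm in the coordinate $\delta$. The only care needed is to note that we are treating $s$ as the scale parameter appearing in the covariance (i.e.\ the conditional law of $S(x)$ given $s$), which matches the way $s$ is used in Lemma~\ref{lemma:covariance_for_Reid} before being integrated out to obtain the Student predictive distribution.
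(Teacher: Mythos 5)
Your proposal is correct and follows essentially the same route as the paper's proof: write $x-\widetilde{x}=s^{1/2}Y\delta$ with $\delta$ standard normal (from the posterior $\mathcal{N}(\cdot\,|\,0,sYY^{T})$), then use the $A$-orthogonality $Y^{T}AY=I$ to reduce $S(x)$ to $s\,\delta^{T}\delta\sim s\chi^{2}_{n-m}$. The additional remarks you make (on $G=sI$, on conditioning on $s$ before marginalisation) are consistent with the paper and add no gap.
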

\begin{proof}
    Distribution of $x - \widetilde{x}$ is $\mathcal{N}(\cdot|0, sYY^T)$, so $x - \widetilde{x} = s^{1/2} Y \delta_1$, where $\delta_1$ is standard multivariate normal variable. Using this we find $S(x) = s \delta_1^TY^TAY\delta_{1} = s \delta_1^T\delta_{1}$, where the last equality follows from $A-$orthogonality of columns of $Y$.
\end{proof}

Note, that $z^Tz$ from Lemma~\ref{lemma:covariance_for_Reid} is an independent sample from $e^TA e$, where $e$ is a current error vector. So, if $\alpha=\beta=0$, mean value of $s$ is approximately $\sum_{i=1}^{k}e_{i}^T Ae_{i}\big/(k(n-m))$, so $S-$statistic takes a form
\begin{equation}
    S(x) \simeq \frac{1}{k}\left(\sum_{i=1}^{k}e_{i}^T Ae_{i}\right) \frac{\chi_{n-m}^{2}}{(n-m)}.
\end{equation}
Since $\mathbb{E}\left[\chi_{n-m}^{2}\right] = n-m$ we can expect that $S-$statistic is well calibrated.

Comparison of uncertainty calibration provided by Lemma~\ref{lemma:cheap_UQ}, Lemma~\ref{lemma:expensive_UQ} and Lemma~\ref{lemma:covariance_for_Reid} appears in Section~\ref{section:Numerical_experiments}.

\section{Comparison with \cite{reid2020probabilistic}}
\label{section:Comparison_with_BayesCG}
In recent contribution \cite{reid2020probabilistic}, authors explore related ideas to the construction of probabilistic projection methods. In this section we show that the covariance matrix introduced in \cite[Definition 3.1]{reid2020probabilistic} corresponds to a particular choice of $Y$ and $G$ in Theorem~\ref{theorem:main_result}, we formulate a conjecture about optimality of the low-rank posterior in \cite{reid2020probabilistic}, and comment on uncertainty calibration adopted in \cite{reid2020probabilistic}.

\subsection{Covariance matrix}
\label{subsection:Covariance_matrix}
In \cite{reid2020probabilistic} authors propose to use the following covariance matrix:
\begin{equation}
\label{Reid_prior_cov}
    \Sigma_{0} = \sum_{i=1}^{m+d} \left(\gamma_{i}\left\|r_{i-1}\right\|_2^2\right)\widetilde{v}_{i} \left(\widetilde{v}_{i}\right)^T,~\widetilde{v}_{i}=v_{i}\big/\sqrt{\eta_{i}}
\end{equation}
where $\eta_{i}$, $v_{i}$, $r_{i}$ are as in Algorithm~\ref{algorithm:CG}, and $d\ll n-m$ is a small number of additional CG iterations used to calibrate uncertainty. For this covariance matrix they show that posterior covariance after projection on the first $m$ search directions reads
\begin{equation}
\label{Reid_posterior_cov}
    \Sigma_{m} = \sum_{i=m+1}^{m+d} \left(\gamma_{i}\left\|r_{i-1}\right\|_2^2\right)\widetilde{v}_{i} \left(\widetilde{v}_{i}\right)^T.
\end{equation}

\begin{algorithm}[t]
    \caption{Conjugate gradient.}
    \label{algorithm:CG}
    \begin{algorithmic}[1]
        \STATE \textbf{Input:} positive definite matrix $A$, right-hand side $b$, initial guess $x$, number of sweeps $m$.
        \STATE \textbf{Output:} approximate solution $x$.
        \\\hrulefill
        \STATE $r_{0} = b - Ax$
        \STATE $v_{1} = r_{0}$
        \FOR{$i = 1:m$}
            \STATE $\eta_{i} = v_{i}^{T}Av_{i}$
            \STATE $\gamma_{i} = r_{i-1}^{T}r_{i-1}\big/\eta_{i}$
            \STATE $x_{i} = x_{i-1} + \gamma_{i}v_{i}$
            \STATE $r_{i} = r_{i-1} - \gamma_{i}Av_{i}$
            \STATE $\delta_{i} = r_{i}^Tr_{i}\big/r_{i-1}^{T}r_{i-1}$
            \STATE $v_{i+1} = r_{i} + \delta_{i}v_{i}$
        \ENDFOR
        \STATE $x = x_{m}$
    \end{algorithmic}
\end{algorithm}

We are going to show that covariance matrix \eqref{Reid_prior_cov} is in line with Theorem~\ref{theorem:main_result}. We start with the following supplementary result.
\begin{lemma}
\label{lemma:invariance}
    Mean vector $\widetilde{x}$ in Theorem~\ref{theorem:main_result} does not depend on the choice of bases in subspaces ${\sf Range}\left(V\right)$, ${\sf Range}\left(W\right)$.
\end{lemma}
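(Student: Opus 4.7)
The plan is to show directly that the explicit formula $\widetilde{x}=x_0+V(W^TAV)^{-1}W^T(b-Ax_0)$ is invariant under any invertible reparameterization of the columns of $V$ and $W$, which is exactly what "change of basis in ${\sf Range}(V)$ and ${\sf Range}(W)$" means in matrix form.

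Concretely, I would let $R_V,R_W\in\mathbb{R}^{m\times m}$ be arbitrary invertible matrices and set $V'=VR_V$, $W'=WR_W$. Note that ${\sf Range}(V')={\sf Range}(V)$ and ${\sf Range}(W')={\sf Range}(W)$, and any basis of these subspaces arises this way. The next step is to observe that $(W')^TAV'=R_W^T(W^TAV)R_V$, which is still invertible since $R_V,R_W,W^TAV$ are, so the projection method with primed matrices is still well-defined. The key line is then the computation
\begin{equation*}
V'\bigl((W')^TAV'\bigr)^{-1}(W')^T
= VR_V\,R_V^{-1}(W^TAV)^{-1}R_W^{-T}\,R_W^TW^T
= V(W^TAV)^{-1}W^T,
\end{equation*}
where $R_V$ cancels on the left and $R_W$ cancels on the right. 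Substituting this identity into the formula for $\widetilde{x}$ shows that the mean vector produced with the primed bases equals the one produced with the original bases.

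There is no real obstacle here, since the whole argument reduces to the observation that $V(W^TAV)^{-1}W^T$ is manifestly a function of the two subspaces rather than of the particular matrix representations chosen for them; the invertibility of $R_V$ and $R_W$ is the only thing that has to be checked to make the cancellation rigorous. I would conclude by noting that the same cancellation shows $\widetilde{x}$ is, equivalently, the unique solution of the Petrov--Galerkin system in ${\sf Range}(V)$ with residual orthogonal to ${\sf Range}(W)$, which gives a basis-free characterization and confirms the invariance intrinsically.
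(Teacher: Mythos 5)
Your proof is correct and follows essentially the same route as the paper: represent the change of bases by invertible matrices multiplying $V$ and $W$ on the right, then cancel them inside $V(W^TAV)^{-1}W^T$. You additionally verify that the primed projection method remains well-defined, which the paper leaves implicit.
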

\begin{proof}
    Let columns of $\widetilde{V}$ and $\widetilde{W}$ be new bases in subspaces ${\sf Range}\left(V\right)$ and ${\sf Range}\left(W\right)$. It is always possible to find invertible square matrices $G_{1}$, $G_{2}$ that perform a change of bases, i.e., $V = \widetilde{V}G_{1}$ and $W = \widetilde{W}G_{1}$. After the substitution of $\widetilde{V}$ and $\widetilde{W}$ in \eqref{projection_method} yields
    \begin{equation*}
        \begin{split}
            \widetilde{x} &= x_0 + \widetilde{V}G_{1} \left(G_{2}^T \widetilde{W}^T A \widetilde{V}G_{1}\right)^{-1}G_{2}^{T}\widetilde{W}^{T}\left(b - A x_{0}\right)\\
            & = x_0 + \widetilde{V} \left(\widetilde{W}^T A \widetilde{V}\right)^{-1}\widetilde{W}^{T}\left(b - A x_{0}\right).
        \end{split}
    \end{equation*}
    So the mean vector does not depend on the choice of basis.
\end{proof}

Now, we show that the following result holds.
\begin{theorem}
\label{theorem:equivalence_Reid}
    For $A>0$ let $Y$, $G$ and $V=W$ be chosen as follows. Columns of $Y\in\mathbb{R}^{n\times(n-m)}$ are search directions $\widetilde{v}_{i} = v_{i}\big/\sqrt{\eta_{i}}$, $i= m+1, \dots, n$, matrix $G\in\mathbb{R}^{(n-m)\times(n-m)}$ is diagonal with elements $G_{ii} = \gamma_{i} \left\|r_{i-1}\right\|_2^2$, $i= m+1, \dots, n$, where $v_{i}$ and $\eta$, $r_{i}$, $\gamma_{i}$ are defined by Algorithm~\ref{algorithm:CG}. Columns of matrix $V$ form a basis for Krylov subspace $\mathcal{K}_{m}\left(A, r_0\right) = {\sf Span}\left\{r_{0}, A r_{0}, \dots, A^{m-1}r_{0}\right\}$.

    Let  the solution to $Ax = b$ be a normal random variable with probability density function $p(x) = \mathcal{N}(x|x_{0}, \Sigma_{0})$, where $\Sigma_{0} = V V^{T} + Y G Y^T$.

    Under this condition the mean of posterior distribution $p(x|W^{T}Ax=W^{T}b)$ coincides with projection method \eqref{projection_method} (and with the output of Algorithm \ref{algorithm:CG} in exact arithmetic), and the covariance matrix is $Y G Y^T = \sum_{i=m+1}^{n} \left(\gamma_{i}\left\|r_{i-1}\right\|_2^2\right)\widetilde{v}_{i} \left(\widetilde{v}_{i}\right)^T$.
\end{theorem}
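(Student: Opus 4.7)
The plan is to verify the hypotheses of Theorem~\ref{theorem:main_result} for the data of this statement and then simply read off the conclusion.

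First I recall three classical facts about Algorithm~\ref{algorithm:CG} that I will use without proof (see e.g.\ \cite{saad2003iterative}): in exact arithmetic, the generated search directions $\{v_1,\ldots,v_n\}$ are $A$-conjugate, i.e.\ $v_i^{T}Av_j = \eta_i\delta_{ij}$, and span $\mathbb{R}^{n}$; moreover $\mathrm{span}\{v_1,\ldots,v_m\} = \mathcal{K}_{m}(A,r_0)$; and the $m$-th CG iterate equals the Petrov--Galerkin approximation \eqref{projection_method} with $\mathcal{K}=\mathcal{L}=\mathcal{K}_{m}(A,r_0)$. Consequently the rescaled vectors $\widetilde{v}_{i} = v_{i}/\sqrt{\eta_{i}}$ are $A$-orthonormal, hence linearly independent.

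Next I check the hypotheses of Theorem~\ref{theorem:main_result} in turn. (i) $A$ is invertible by assumption, $V$ is full-rank, and $W^{T}AV = V^{T}AV$ is positive definite (hence invertible) because $A>0$. (ii) Since ${\sf Range}(V)=\mathrm{span}\{v_1,\ldots,v_m\}$ and the columns of $Y$ are $A$-orthogonal to $v_1,\ldots,v_m$, we have $W^{T}AY = V^{T}AY = 0$, so ${\sf Range}(Y)\subseteq {\sf Null}(W^{T}A)$; invertibility of $A$ and full rank of $W$ give $\dim{\sf Null}(W^{T}A) = n-m = \dim{\sf Range}(Y)$, so equality of subspaces follows. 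By Lemma~\ref{lemma:invariance} the specific basis chosen for ${\sf Range}(V)$ does not affect this identification. (iii) $G$ is diagonal with nonnegative entries, so $G\geq 0$.

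Applying Theorem~\ref{theorem:main_result} gives $p(x|W^{T}Ax = W^{T}b) = \mathcal{N}(x|\widetilde{x},\,Y G Y^{T})$ with $\widetilde{x}$ defined by \eqref{projection_method}, and the classical identification of the CG iterate with $\widetilde{x}$ finishes the claim about the mean. The asserted form of the posterior covariance is then just the spectral expansion
\[
Y G Y^{T} \;=\; \sum_{i=m+1}^{n} G_{ii}\,\widetilde{v}_{i}\widetilde{v}_{i}^{T} \;=\; \sum_{i=m+1}^{n} \gamma_{i}\lVert r_{i-1}\rVert_{2}^{2}\,\widetilde{v}_{i}\widetilde{v}_{i}^{T}.
\]
The main obstacle is not any one deep step but careful use of the exact-arithmetic CG properties: one must be sure that $\{v_i\}_{i=1}^{n}$ really does extend to a full $A$-conjugate basis of $\mathbb{R}^{n}$ (so that the dimension count in (ii) is tight and the columns of $Y$ are linearly independent), since any premature breakdown would shrink ${\sf Range}(Y)$ and the identification with ${\sf Null}(W^{T}A)$ would fail. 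Everything else is routine bookkeeping and invocation of results already proved above.
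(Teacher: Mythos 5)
Your proposal is correct and follows essentially the same route as the paper: change basis via Lemma~\ref{lemma:invariance} to the $A$-orthogonal directions, establish ${\sf Range}(Y) = {\sf Null}(W^{T}A)$, and invoke Theorem~\ref{theorem:main_result}. The only cosmetic difference is that you obtain the reverse inclusion by a dimension count where the paper argues via completeness of the $A$-conjugate set, and you rightly flag the no-breakdown assumption that the paper relegates to a footnote.
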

\begin{proof}
    By construction $\widetilde{v}_{i},~i=1,\dots, m$ form an $A$-orthogonal basis for $\mathcal{K}_{m}\left(A, r_0\right)$. Using Lemma~\ref{lemma:invariance} we can transform matrix $V$, such that columns of new matrix $\widetilde{V}$ are $\widetilde{v}_{i},~i=1,\dots, m$.

    To apply Theorem~\ref{theorem:main_result} we need to check that ${\sf Range}(Y) = {\sf Null}(W^{T}A)$. Indeed, if $y\in {\sf Range}(Y)$ it has a form $y = \sum_{i=m+1}^{n}y_{i}\widetilde{v}_{i}$, we can see that $W^TAy = V^T A y = 0$, because $\widetilde{v}_i^T A \widetilde{v}_{j} = 0$ for $i=1,\dots, m$, $j = m+1,\dots,n$. This means ${\sf Range}(Y)\subset {\sf Null}(W^{T}A)$. Now, if $y\in{\sf Null}(W^{T}A)$ it is $A-$orthogonal to the first $m$ vectors $\widetilde{v}_{i}$, because $\widetilde{v}_{i},~i=1,\dots,n$ form a complete set\footnote{We do not consider ``lucky breakdowns'' \cite[Section 6.3.1]{saad2003iterative}}, we conclude that $y\in{\sf Range}(Y)$ and ${\sf Null}(W^{T}A) \subset {\sf Range}(Y)$.

    Because, ${\sf Range}(Y) = {\sf Null}(W^{T}A)$ we can apply Theorem~\ref{theorem:main_result} which gives us \eqref{projection_method} as mean and $YGY^T$ as a covariance matrix.
\end{proof}
From Theorem~\ref{theorem:equivalence_Reid} we can conclude that the covariance from \cite{reid2020probabilistic} can be considered as a special case of general result given in Theorem~\ref{theorem:main_result}.

Before the comparison on uncertainty calibration we want to discuss a low-rank approximation \eqref{Reid_posterior_cov} to a full-rank matrix $\Sigma_{m}$ from Theorem~\ref{theorem:equivalence_Reid}. Is it the ``best'' low-rank approximation? We believe, that in some sense it is. To motivate this we start with a supplementary statement.
\begin{lemma}
\label{lemma:new_normal}
    For $A>0$ we define the following operator norm $\left\|B\right\|_{A,A^{-1}} \equiv \sup_x \left\|Bx\right\|_A\big/\left\|x\right\|_{A^{-1}}$.
    If $B = \sum_{j=1}^{K} d_{j} u_{j}u_{j}^T$ where $d_{1}\geq d_{2}\geq\dots\geq d_{K}>0, K\leq n$ and $u_{j}^{T}A u_{k} = \delta_{jk}$, the operator norm of $B$ is $\left\|B\right\|_{A, A^{-1}} = d_{1}$.
\end{lemma}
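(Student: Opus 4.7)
The plan is to reduce the mixed-norm operator-norm computation to an ordinary spectral calculation by a change of variables that turns the $A$-orthogonal system $\{u_j\}$ into an orthonormal one.

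First I would expand the squared norm in the numerator. Because $B = \sum_{j=1}^{K} d_j u_j u_j^T$ is symmetric and the vectors $u_j$ are $A$-orthonormal, a direct computation gives
\begin{equation*}
\|Bx\|_{A}^{2} = x^{T}B^{T}ABx = x^{T}\left(\sum_{j,k} d_{j}d_{k}\, u_{j}(u_{j}^{T}Au_{k})u_{k}^{T}\right)x = \sum_{j=1}^{K} d_{j}^{2}\,(u_{j}^{T}x)^{2},
\end{equation*}
where the cross terms vanish by $u_j^T A u_k = \delta_{jk}$. Meanwhile $\|x\|_{A^{-1}}^{2} = x^{T}A^{-1}x$, so
\begin{equation*}
\|B\|_{A,A^{-1}}^{2} = \sup_{x\neq 0} \frac{\sum_{j=1}^{K} d_{j}^{2}\,(u_{j}^{T}x)^{2}}{x^{T}A^{-1}x}.
\end{equation*}

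Next I would perform the substitution $y = A^{-1/2}x$, so that $x^{T}A^{-1}x = y^{T}y$ and $u_{j}^{T}x = \tilde{u}_{j}^{T}y$ with $\tilde{u}_{j} := A^{1/2}u_{j}$. The $A$-orthonormality of $\{u_j\}$ is equivalent to Euclidean orthonormality of $\{\tilde{u}_j\}$, since $\tilde{u}_{j}^{T}\tilde{u}_{k} = u_{j}^{T}Au_{k} = \delta_{jk}$. The ratio therefore becomes
\begin{equation*}
\|B\|_{A,A^{-1}}^{2} = \sup_{y\neq 0} \frac{\sum_{j=1}^{K} d_{j}^{2}\,(\tilde{u}_{j}^{T}y)^{2}}{y^{T}y}.
\end{equation*}

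The final step is a standard Rayleigh-quotient observation: extend $\{\tilde{u}_j\}_{j=1}^{K}$ to an orthonormal basis of $\mathbb{R}^{n}$ and write $y = \sum_i c_i \tilde{u}_i$. The numerator equals $\sum_{j=1}^{K} d_j^{2} c_j^{2}$ and the denominator equals $\sum_i c_i^{2}$; since $d_1 \geq d_2 \geq \cdots \geq d_K > 0$, this ratio is bounded above by $d_1^{2}$, with equality attained at $y = \tilde{u}_{1}$ (equivalently $x = A^{1/2}\tilde{u}_{1} = Au_{1}$). Taking square roots yields $\|B\|_{A,A^{-1}} = d_{1}$.

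There is essentially no obstacle here beyond bookkeeping; the only point requiring a little care is confirming that $A^{1/2}$ really transports the $A$-orthonormal system into an orthonormal one, so that the supremum reduces to the largest coefficient $d_1^{2}$ rather than to some weighted combination of the $d_j^{2}$.
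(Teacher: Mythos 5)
Your proof is correct and follows essentially the same route as the paper: a change of variables exploiting the $A$-orthonormality of the $u_j$ reduces the mixed-norm quotient to an ordinary Rayleigh quotient whose supremum is $d_1^2$. The only (cosmetic) difference is that you substitute $y = A^{-1/2}x$ rather than $y = A^{-1}x$, which makes the final reduction a genuinely Euclidean Rayleigh quotient over all of $\mathbb{R}^n$ and thereby renders explicit the step the paper dispatches with ``without loss of generality we take $y = U\alpha$.''
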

\begin{proof}
    Let columns of $U$ be $u_{j},~j=1,\dots,K$ and $D$ be a diagonal matrix with $D_{jj} = d_{j}$. Using the definition we get
    \begin{equation*}
        \sup_{x} \frac{\sqrt{x^T U D U^T A U D U^T x}}{\sqrt{x^T A^{-1} x}} = \sup_{y} \frac{\sqrt{y^T A U D^2 U^T A y}}{\sqrt{y^T A y}},
    \end{equation*}
    where we used $A-$orthogonality and define $y = A^{-1}x$. Now, without the loss of generality we take $y = U\alpha$ to obtain
    \begin{equation*}
        \left\|B\right\|_{A, A^{-1}} = \sup_{\alpha} \frac{\sqrt{\alpha^T D^2 \alpha}}{\sqrt{\alpha^T \alpha}} = d_{1}.
    \end{equation*}
\end{proof}
Next we extend a well-known optimal low-rank approximation result on norm $\left\|\cdot\right\|_{A, A^{-1}}$.
\begin{lemma}
\label{lemma:Trefethen}
    Let $B$ be the same as in Lemma~\ref{lemma:new_normal}, and $B_{m} = \sum_{j=1}^{m} d_{j} u_{j}u_{j}^T,~m<K$. Then
    \begin{equation*}
        \left\|B - B_{m}\right\|_{A, A^{-1}} = \inf_{{\sf rank} C\leq m}\left\|B - C\right\|_{A, A^{-1}} = d_{m+1}.
    \end{equation*}
\end{lemma}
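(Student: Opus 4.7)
The plan is to reduce the problem to the classical Eckart--Young--Mirsky theorem via a congruence that converts the weighted norm $\|\cdot\|_{A,A^{-1}}$ into the ordinary spectral norm. Substituting $x = A^{1/2} y$ in the definition,
\begin{equation*}
\|B\|_{A,A^{-1}}^{2} = \sup_{x} \frac{x^{T} B^{T} A B x}{x^{T} A^{-1} x} = \sup_{y} \frac{y^{T} A^{1/2} B^{T} A B A^{1/2} y}{y^{T} y} = \|A^{1/2} B A^{1/2}\|_{2}^{2},
\end{equation*}
so $\|B\|_{A,A^{-1}} = \|A^{1/2} B A^{1/2}\|_{2}$, where $\|\cdot\|_{2}$ is the standard spectral norm. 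Since $A>0$, the map $C\mapsto A^{1/2} C A^{1/2}$ is a bijection on $\mathbb{R}^{n\times n}$ that preserves rank, hence
\begin{equation*}
\inf_{\mathsf{rank}\,C\leq m}\|B-C\|_{A,A^{-1}} = \inf_{\mathsf{rank}\,C'\leq m}\|M - C'\|_{2}, \qquad M := A^{1/2} B A^{1/2}.
\end{equation*}

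Next, I would identify the singular values of $M$. Setting $w_{j} = A^{1/2} u_{j}$, the $A$-orthonormality condition $u_{j}^{T} A u_{k} = \delta_{jk}$ becomes $w_{j}^{T} w_{k} = \delta_{jk}$, so $\{w_{j}\}_{j=1}^{K}$ is an orthonormal set. Therefore $M = \sum_{j=1}^{K} d_{j}\, w_{j} w_{j}^{T}$ is simultaneously an eigendecomposition and an SVD of $M$, with singular values $d_{1}\geq d_{2}\geq \dots \geq d_{K}>0$ followed by $n-K$ zeros. The classical Eckart--Young--Mirsky theorem in the spectral norm then gives
\begin{equation*}
\inf_{\mathsf{rank}\,C'\leq m}\|M - C'\|_{2} = \sigma_{m+1}(M) = d_{m+1}.
\end{equation*}

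Finally, I would exhibit the infimum as attained by $B_{m}$. The remainder $B - B_{m} = \sum_{j=m+1}^{K} d_{j}\, u_{j} u_{j}^{T}$ satisfies the hypotheses of Lemma~\ref{lemma:new_normal} with leading coefficient $d_{m+1}$, so $\|B - B_{m}\|_{A,A^{-1}} = d_{m+1}$, matching the lower bound. The only nontrivial step is the first identity $\|B\|_{A,A^{-1}} = \|A^{1/2} B A^{1/2}\|_{2}$; once this is in place, the rest of the argument is a textbook reduction to Eckart--Young--Mirsky plus a direct invocation of Lemma~\ref{lemma:new_normal}.
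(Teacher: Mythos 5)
Your proof is correct, but it takes a genuinely different route from the paper's. You conjugate by $A^{1/2}$ to show $\left\|B\right\|_{A,A^{-1}} = \left\|A^{1/2}BA^{1/2}\right\|_{2}$, observe that this congruence is a rank-preserving bijection, note that $A^{1/2}BA^{1/2}=\sum_j d_j w_j w_j^T$ with $w_j=A^{1/2}u_j$ orthonormal is already an SVD, and then invoke the classical Eckart--Young--Mirsky theorem in the spectral norm as a black box; attainment by $B_m$ follows from Lemma~\ref{lemma:new_normal}. The paper instead works entirely in the weighted norm and re-runs the dimension-counting contradiction argument of \cite[Theorem 5.8]{MR1444820}: assuming some rank-$m$ matrix $C$ achieves $\left\|B-C\right\|_{A,A^{-1}}<d_{m+1}$, it intersects the $(n-m)$-dimensional nullspace of $C$ with the $(m+1)$-dimensional span of $u_1,\dots,u_{m+1}$ to reach a contradiction. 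Your reduction is more modular and makes transparent \emph{why} the weighted statement is no more general than the classical one (the weighted norm is the spectral norm in disguise after a congruence); the paper's version is self-contained, avoids introducing $A^{1/2}$ explicitly, and mirrors the textbook proof it cites. Both arguments are sound, and both correctly identify $d_{m+1}$ as the optimal value attained by $B_m$.
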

\begin{proof}
    From Lemma~\ref{lemma:new_normal} we know that $\left\|B - B_{m}\right\|_{A, A^{-1}} = d_{m+1}$. For the second part we use a proof by contradiction from \cite[Theorem 5.8]{MR1444820}.

    Suppose that there is $C,~{\sf rank}(C)\leq m$ for which the norm of the difference is smaller, i.e., $\left\|B - C\right\|_{A, A^{-1}}<d_{m+1}$. Because $C$ has rank $m$ there is a $n-m$ dimensional subspace $R\subset\mathbb{R}^{n}:\forall r\in R \Rightarrow Cr = 0$. This implies
    \begin{equation*}
    \begin{split}
        \left\|Br\right\|_{A, A^{-1}} &= \left\|\left(B-C\right)r\right\|_{A, A^{-1}}\\
        &\leq \left\|B-C\right\|_{A, A^{-1}}\left\|r\right\|_{A^{-1}}<\sigma_{m+1}\left\|r\right\|_{A^{-1}}.
    \end{split}
    \end{equation*}
    We know that in the subspace $\widetilde{R}$ spanned by $u_{j}$, $j=1,\dots,m+1$ the norm of the matrix $B$ fulfills $\left\|B\widetilde{r}\right\|_{A, A^{-1}}\geq d_{m+1} \left\|\widetilde{r}\right\|_{A^{-1}}$. Because for these subspaces $\left|\widetilde{R}\right| + \left|R\right| = n+1$, there is a vector that belongs to both of them. Thus by contradiction $\left\|B - C\right\|_{A, A^{-1}}\geq d_{m+1}$, and the bound is attained by $B_{m}$.
\end{proof}

Lemma~\ref{lemma:Trefethen} implies that approximation \eqref{Reid_posterior_cov} is optimal (best $d-$rank approximation) in $\left\|\cdot\right\|_{A, A^{-1}}$ norm if $\gamma_{i}\left\|r_{i-1}\right\|_2^2, i=1,\dots, n$ form a non-increasing sequence. Unfortunately, this is not the case, because $\left\|r_{i}\right\|$ can increase in the course of iterations.

\begin{figure}
    \centering
    \includegraphics[scale=0.52]{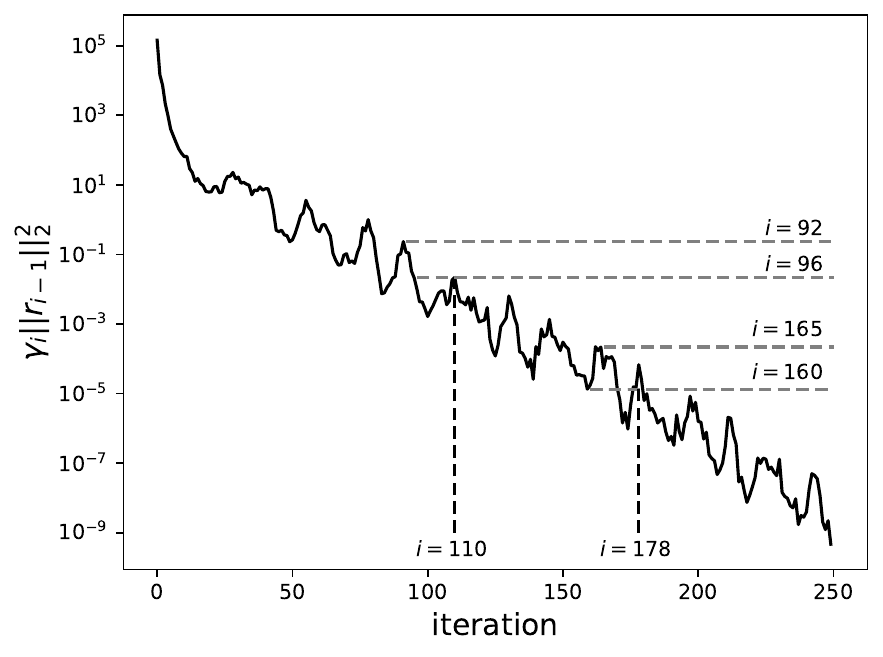}
    \caption{Figure demonstrates $\gamma_{i}\left\|r_{i-1}\right\|_2^2$ for matrix bcsstm07 from SuiteSparse matrix collection.}
    \label{fig:optimality}
\end{figure}

However, because $\left\|r_{i}\right\|\rightarrow 0$ in exact arithmetic, it seems, we still can obtain an optimal low rank approximation for an appropriate choice of $d$ in \eqref{Reid_posterior_cov}. This is exemplified in Figure~\ref{fig:optimality}. Evidently, if $m=91$ for $d\leq4$ we obtain optimal $d-$rank approximation to the whole covariance matrix $\Sigma_{m}$ from Theorem~\ref{theorem:equivalence_Reid}. However if we take $4<d<8$ we achieve no improvement over $d=4$ because the next peak $i=110$ has larger $\gamma_{i}\left\|r_{i-1}\right\|_2^2$. Less favourable situation occurs when $m=159$. In this case all $d<5$ does not result in optimal $d-$rank approximation, and $d=5$ gives an optimal $1-$rank approximation. Based on these observations we formulate the following conjecture.

\begin{conjecture}
\label{conjecture:low_rank_approximation}
    For almost any positive definite matrix $A\in\mathbb{R}^{n\times n}$, for any iteration $m\leq n$, there is a $d(m)\ll n$ and $r(d)\leq d(m)$ such that a covariance matrix $\Sigma_{m} = \sum_{i=m+1}^{m+d(m)} \left(\gamma_{i}\left\|r_{i-1}\right\|_2^2\right)\widetilde{v}_{i} \left(\widetilde{v}_{i}\right)^T$ is an optimal $r(d)-$rank approximation to the full covariance matrix $\widetilde{\Sigma}_{m} = \sum_{i=m+1}^{n} \left(\gamma_{i}\left\|r_{i-1}\right\|_2^2\right)\widetilde{v}_{i} \left(\widetilde{v}_{i}\right)^T$ with respect to the operator norm $\left\|\cdot\right\|_{A, A^{-1}}$.
\end{conjecture}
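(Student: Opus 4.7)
The first step is to translate the conjecture into a condition on the scalar sequence $c_i := \gamma_i \|r_{i-1}\|_2^2$ for $i > m$. Because the rescaled CG search directions $\widetilde{v}_i$ are $A$-orthonormal, Lemma~\ref{lemma:Trefethen} shows that the best rank-$r$ approximation of $\widetilde{\Sigma}_m$ in the norm $\|\cdot\|_{A, A^{-1}}$ is obtained by keeping the $r$ indices with the largest $c_i$, and the resulting error equals the $(r+1)$-th order statistic of $\{c_i\}_{i > m}$. Consequently, the windowed truncation $\Sigma_m$ attains the optimal rank-$r(d)$ error precisely when $\max_{i > m + d(m)} c_i$ coincides with the $(r(d)+1)$-th largest entry of the full sequence; this forces the top $r(d)$ values to lie inside the window $[m+1, m+d(m)]$, and the bound $r(d) \leq d(m)$ follows automatically. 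The nontrivial content of the conjecture is therefore the existence of a window size $d(m)$ small compared to $n$ after which no coefficient larger than the top few escapes the window.

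To establish this, I would combine two ingredients. The first is the classical energy identity $\|x^\star - x_m\|_A^2 = \sum_{i > m} c_i$, which together with the Chebyshev-type CG convergence bound controls the total tail mass geometrically in $m$ and thus yields a geometric envelope on each $c_i$ after an initial transient. The second is a precise formulation of ``almost any $A$'' as a dense open (or full-measure) subset of the SPD cone on which the spectrum is in general position and the initial residual has nonvanishing overlap with every eigenvector of $A$. Under such a genericity hypothesis, the Kaniel--Saad estimates refine the envelope and are expected to guarantee that after some index $i^\star(A, m)$ the sequence $c_i$ descends monotonically below every earlier peak. One then sets $d(m) = i^\star(A, m) - m$ and defines $r(d) = \#\{i \in [m+1, m+d(m)] : c_i > \max_{j > m+d(m)} c_j\}$; by construction, $\Sigma_m$ achieves the optimal rank-$r(d)$ error in $\|\cdot\|_{A, A^{-1}}$.

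The main obstacle is bounding $i^\star(A, m)$ sublinearly in $n$ in a way that genuinely covers a substantial class of SPD matrices. As the \texttt{bcsstm07} example in Figure~\ref{fig:optimality} already shows, the sequence $c_i$ can display several widely separated peaks whose heights depend on delicate resonances between Ritz values and yet-to-be-captured eigenvalues of $A$, and the available CG convergence theory controls only cumulative quantities like $\sum_{i>m} c_i$ and is too blunt to resolve individual peak locations. Making ``almost any $A$'' precise will likely require a probabilistic model on the spectrum of $A$ and on the initial residual, under which the probability of an anomalously late peak is controllable; a deterministic unconditional statement appears out of reach. I therefore expect that only a conditional version, under spectral-clustering hypotheses or under quantitative bounds on consecutive ratios $c_{i+1}/c_i$, can be proved with current techniques, and that the full conjecture should be interpreted as pointing to an empirical regularity rather than a theorem provable from classical CG theory alone.
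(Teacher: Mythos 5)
The statement you were asked about is posed in the paper as a \emph{conjecture}: the paper offers no proof, only the motivation supplied by Lemma~\ref{lemma:new_normal} and Lemma~\ref{lemma:Trefethen}, the Hestenes--Stiefel identity $\left\|x^{\star}-x_{m}\right\|_{A}^{2}=\sum_{i>m}\gamma_{i}\left\|r_{i-1}\right\|_{2}^{2}$ (so the coefficients tend to zero in exact arithmetic), and the empirical evidence of Figure~\ref{fig:optimality}. Your first paragraph reproduces exactly this reduction: since the $\widetilde{v}_{i}$ are $A$-orthonormal, Lemma~\ref{lemma:Trefethen} turns the conjecture into a purely combinatorial statement about the scalar sequence $c_{i}=\gamma_{i}\left\|r_{i-1}\right\|_{2}^{2}$, namely that the top $r(d)$ values occur inside a window of width $d(m)\ll n$ after index $m$, with the achieved error equal to the $(r(d)+1)$-th order statistic. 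One small wording caveat: $\Sigma_{m}$ generically has rank $d(m)>r(d)$, so ``optimal $r(d)$-rank approximation'' must be read, as you implicitly do, as ``attains the error of the optimal rank-$r(d)$ approximation''; the paper's own $m=159$ example ($d=5$ giving an optimal $1$-rank approximation) confirms this reading. Your second and third paragraphs are a candid and accurate assessment of why the remaining step is hard: the classical Chebyshev and Kaniel--Saad machinery controls cumulative tail sums, not the locations and heights of individual late peaks of $c_{i}$, which depend on resonances between Ritz values and uncaptured eigenvalues; making ``almost any $A$'' precise would indeed require a probabilistic model on the spectrum and initial residual. This is precisely why the authors leave the statement as a conjecture, so your conclusion that only a conditional version is currently provable is consistent with the paper rather than a gap in your argument.
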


Note, that the conjecture, if correct, merely ensures an optimality of approximation \eqref{Reid_posterior_cov} to the full covariance matrix. Conjecture~\ref{conjecture:low_rank_approximation} does not tell whether the full matrix is optimal for uncertainty quantification in some (yet undefined) sense.

\subsection{Comparison of uncertainty calibration}
\label{subsection:Uncertainty_calibration}
Unlike previous works \cite{bartels2019probabilistic}, \cite{cockayne2018bayesian} in article \cite{reid2020probabilistic} authors focus on $A-$norm of error. For this choice it is easy to construct an underestimate for an error $\left\|x^{\star} - x_{m}\right\|_{A}^2$ using information, available as a byproduct of Algorithm~\ref{algorithm:CG}. Namely, this is done by the following expression \cite[4.1]{reid2020probabilistic}, \cite[Theorem 5:3]{hestenes1952methods}
\begin{equation}
    \left\|x^{\star} - x_{m}\right\|_A^{2} - \left\|x^{\star} - x_{m+d}\right\|_A^{2} = \sum_{m+1}^{m+d}\gamma_{i}\left\|r_{i-1}\right\|_{2}^{2},
\end{equation}
from which we conclude that
\begin{equation}
\label{underestimation}
     \left\|x^{\star} - x_{m}\right\|_A^{2} \geq \sum_{m+1}^{m+d}\gamma_{i}\left\|r_{i-1}\right\|_{2}^{2}.
\end{equation}
The advantage of a posterior covariance matrix defined by \eqref{Reid_posterior_cov} is that to compute it one needs to perform a few additional iterations of conjugate gradient and store $A-$orthogonal directions $v_i$ and scales $\gamma_i \left\|r_{i-1}\right\|_2^2$. So the estimation of $\Sigma_{m}$ is cheap and justified by \eqref{underestimation}.
However, in our opinion there are several disadvantages. First, even when $\left\|e_{i}\right\|_{A}$ is small $\left\|e_{i}\right\|$ can remain large in the subspace corresponding to small eigenvalues of $A$. Second, \eqref{underestimation} provides only underestimate, which can be misleading in case of slow convergence (see Figure~\ref{fig:UQ_Reid} for an example of this behavior for biharmonic equation).

Our approach to uncertainty calibration is based on Lemma~\ref{lemma:covariance_for_Reid} and Algorithm~\ref{algorithm:UQ_calibration} with ${\sf statistic} = S$. Algorithm~\ref{algorithm:UQ_calibration} simply perform an additional run of a projection method (conjugate gradient in this case) for a known $x^{\star}$, and records $\left\|x^{\star}-x_{m}\right\|_{A}^{2}$. This norm is then used as an estimation for an error with a target right-hand side $b$ for which $x^{\star}$ is unknown. We will see that this approach leads to more reasonable $S-$statistic. The obvious disadvantage is a much higher cost of uncertainty calibration. However, our approach can be cheaper in case one needs to solve a set of linear equation with different right-hand sides and the same matrix $A$ (Section~\ref{section:Numerical_experiments} contain a relevant example).

\section{Numerical experiments}
\label{section:Numerical_experiments}
Julia \cite{MR3605826} code that reproduces experiments in this section is available at \url{https://github.com/VLSF/BayesKrylov}.

\subsection{Comparison with \cite{bartels2019probabilistic}}
\label{subsection:Comparison_with_Probabilistic_Projection_Methods}

\begin{figure*}
    \centering
    \includegraphics[scale=0.4]{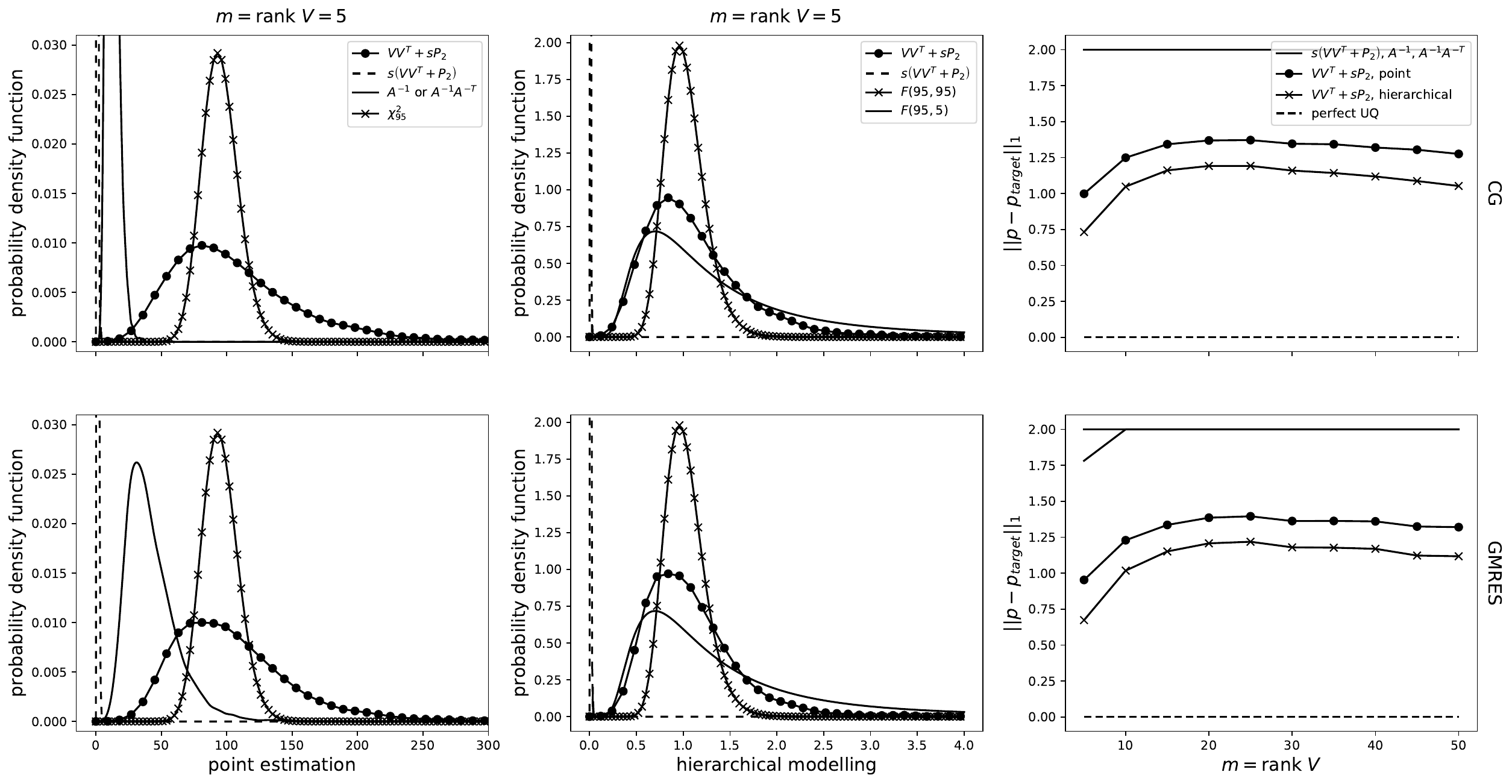}
    \caption{Figures demonstrate theoretical test statistics and empirical distributions for different prior distributions. Common legends for each column appear in the first row. The legend provides specifications of covariance matrices. For example, $s\left(VV^{T} + P_2\right)$ refers to posterior described in Lemma~\ref{lemma:cheap_UQ} with $\Psi = P_2$. The first two columns contain point estimation and hierarchical modelling for five projection steps. The first row presents results related to the conjugate gradient method and the second to GMRES. In the last column we show how $L_1$ norm of the difference between empirical $p_{e}$ and target $p_{t}$ ($\chi^{2}$ or $F$ as explained in Section~\ref{section:Numerical_experiments}) distributions changes with the number of projection steps. Perfect uncertainty calibration corresponds to zero value of discrepancy. The worst possible mismatch corresponds to $L_1$ norm of the error equals two. Overall we can see that the method proposed in Lemma~\ref{lemma:expensive_UQ} provides a reasonable uncertainty for both projection processes.}
    \label{fig:UQ}
\end{figure*}

To assess the uncertainty calibration, we compare theoretical distributions for test statistics with empirical probability density functions averaged over many matrices. Note, that unlike $S-$statistic, $Z-$statistic for perfectly calibrated uncertainty does not depend on the matrix for both point estimation and hierarchical modelling. This makes averaging over $A$ legitimate. Details of this procedure are summarized in Algorithm~\ref{algorithm:UQ_assessment}.
\begin{algorithm}
    \caption{UQ assessment.}
    \label{algorithm:UQ_assessment}
    \begin{algorithmic}[1]
        \STATE \textbf{Input:} Distributions for matrix $p(A)$; exact solution $p(x^{\star})$; number of search directions $m$; projection method $V, W \leftarrow {\sf Proj}\left(A, b, m\right)$,; number of samples $N$; statistics $p(z)\leftarrow {\sf Stat}(e_1, \dots, e_N)$; number of matrices $M$.
        \STATE \textbf{Output:} test statistic.
        \\\hrulefill
        \FOR{$i = \overline{1, M}$}
            \STATE $A_{i}\sim p(A)$
            \FOR{$j = \overline{1, N}$}
                \STATE $x_{j}^{\star} \sim p(x^{\star})$
                \STATE $b_{ij} = A_{i} x^{\star}_{j}$
                \STATE $V_{ij}, W_{ij} \leftarrow  {\sf Proj}\left(A_{i}, b_{ij}, m\right)$
                \STATE $\widetilde{x}_{ij} = V_{ij} \left(W_{ij}^T A_{i} V_{ij}\right)^{-1}W_{ij}^{T}b_{ij}$
                \STATE $e_{ij}\leftarrow \widetilde{x}_{ij} - x^{\star}_{j}$
            \ENDFOR
        \ENDFOR
        \STATE $p(z)\leftarrow {\sf Stat}(e_{11}, \dots, e_{NN})$
    \end{algorithmic}
\end{algorithm}

Details on components of Algorithm~\ref{algorithm:UQ_assessment} are as follows:
\begin{itemize}
    \item[$p(A)$:] To draw symmetric positive definite matrices $A = U D U^{T}$ we sample stacked eigenvectors $U$ from uniform distribution over $O(n)$, and eigenvalues from exponential distribution with scale $\widetilde{s}$.
    \item[$p(x^{\star})$:] As a distribution of exact solution we take standard multivariate normal $\mathcal{N}\left(\cdot|0, I\right)$ as in \cite{cockayne2018bayesian}.
    \item[${\sf Proj}:$] Two projection processes are used. The first one with $W = V = \begin{pmatrix}\widetilde{b} & A\widetilde{b} & \cdots & A^{m-1}\widetilde{b}\end{pmatrix}$, $\widetilde{b} = b\big/\left\|b\right\|_2$ is equivalent to conjugate gradient in exact arithmetic. The second one with $W = AV$, and $V=\begin{pmatrix}\widetilde{b} & A\widetilde{b} & \cdots & A^{m-1}\widetilde{b}\end{pmatrix}$, $\widetilde{b} = b\big/\left\|b\right\|_2$ is equivalent to GMRES under the same condition.
    \item[${\sf Stat}:$] For distribution $p(x) = \mathcal{N}(x|\mu, \Sigma)$, ${\sf rank}(\Sigma)= n - m$ test statistic is $z = \left(x-\mu\right)^{T}\Sigma^{\dagger}\left(x-\mu\right)\sim \chi^{2}_{n-m}$, and for multivariate Student distribution ${\sf St}_{\nu}\left(\mu, \Sigma\right)$, test statistic is $z=\left(x-\mu\right)^{T}\Sigma^{\dagger}\left(x-\mu\right)/(n-m) \sim F(n-m, \nu)$.
\end{itemize}

\begin{figure*}
    \centering
    \includegraphics[scale=0.4]{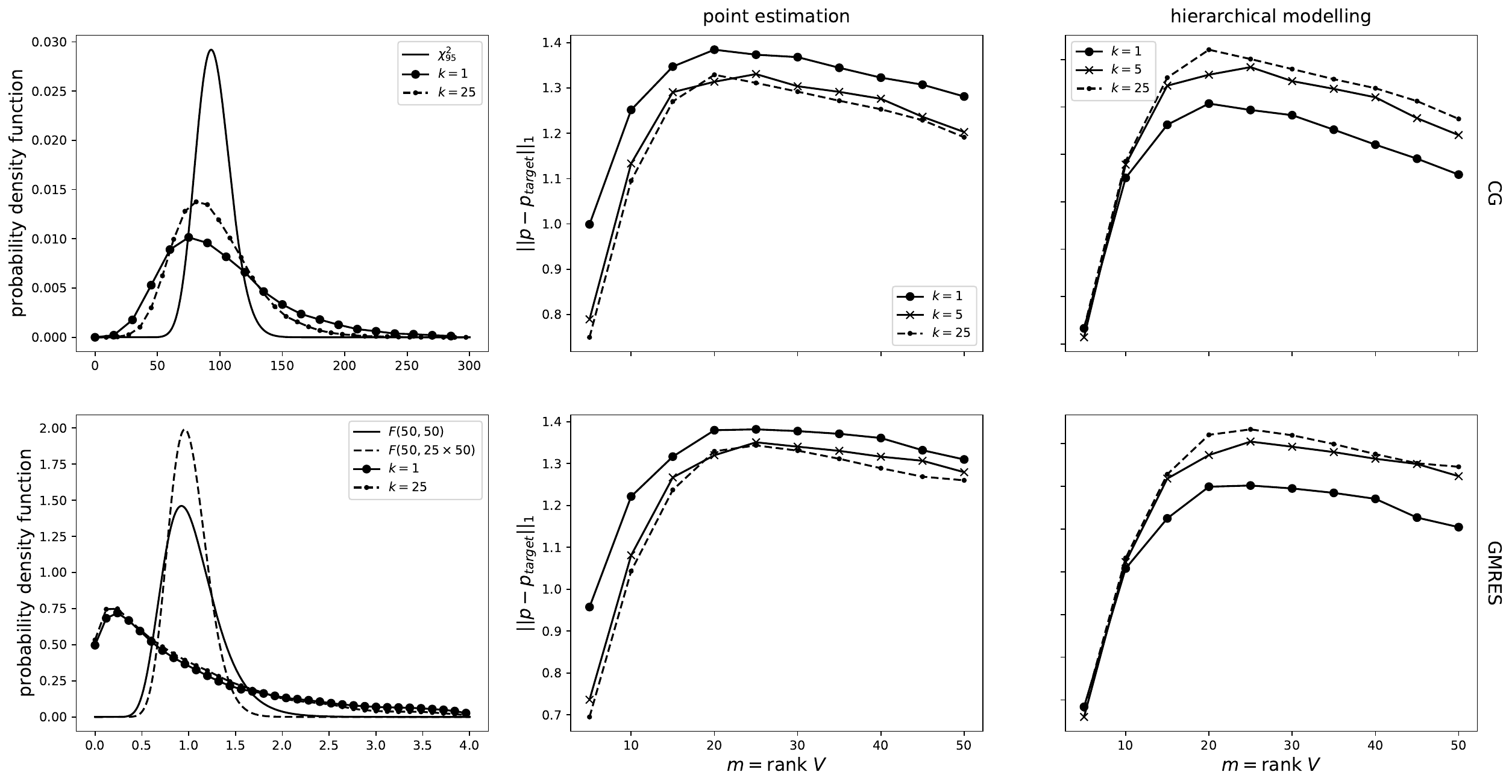}
    \caption{Figures summarize the dependence of proposed uncertainty calibration (Algorithm~\ref{algorithm:UQ_calibration}) on the number of additional observations $k$. First row corresponds to results for conjugate gradient iteration and the second row -- for GMRES iteration. The second and the third columns, which contain point estimation and hierarchical modelling, respectively, share common legends that appeared in the first row. Graphs in these last two columns show how $L_1$ norm of the difference between empirical $p_{e}$ and target $p_{t}$ ($\chi^{2}$ or $F$ as explained in Section~\ref{section:Numerical_experiments}) distributions changes with the number of projection steps for $k = 1, 5, 25$ additional observations in Algorithm~\ref{algorithm:UQ_calibration}. Figures in the first column allow for visual inspection of empirical and target distributions for $Z$-statistic. Namely, for CG, we sketch the probability density function of $Z$-statistic for point estimation in the first row (the target distribution is $\chi^2$), whereas the second row contains the same quantity but for hierarchical modelling (the target distribution is $F$). We can see that for point estimation, additional observations marginally improve uncertainty calibration, whereas, for hierarchical modelling, the situation is reversed. We conclude that, first, it makes little sense to use $k>1$ for the chosen family of linear systems. Second, such behaviour clearly indicates that the chosen statistical model is inadequate for Krylov subspace methods.}
    \label{fig:UQ_obs}
\end{figure*}

In all experiments, the size of the problem is $n = 100$, the scale is $\widetilde{s} = 10$, number of matrices $M = 500$, number of samples is $N = 20$. We also take $G = I$, $\alpha=\beta=0$\footnote{Note that the choice $\alpha = \beta = 0$ leads to the improper prior. In the present case the posterior distribution is always proper, so noninformative prior seems harmless.  Moreover, $s$ is a scale parameter so $p(s)\propto s^{-1}$ is a reasonable choice (see \cite[Section 2.8]{gelman2013bayesian}).} in both Lemma~\ref{lemma:cheap_UQ} and Lemma~\ref{lemma:expensive_UQ}, and use Algorithm~\ref{algorithm:UQ_calibration} with ${\sf statistic} = Z$ and $k=1$, i.e., a single additional sample, to calibrate uncertainty. Results of  Lemma~\ref{lemma:cheap_UQ} and Lemma~\ref{lemma:expensive_UQ} are used in two regimes. The first one is point estimation. In this case parameters $\widetilde{\alpha}$, $\widetilde{\beta}$ of inverse-gamma distribution are used to find a mean value $\mathbb{E}[s] = \beta\big/(\alpha-1)$, and this mean value is used as a scale in covariance matrix $sP_2$. As a result, the statistic $Z(x)$ is compared with $\sim \chi_{n-m}^2$. The second one is a hierarchical modelling. In this case $s$ is marginalized (as in second parts of Lemma~\ref{lemma:cheap_UQ} and Lemma~\ref{lemma:expensive_UQ}) and the resulting statistic $Z(x)$ is compared with $F(n-m, 2\widetilde{\alpha})$. More precisely, according to Lemma~\ref{lemma:cheap_UQ} for prior with covariance matrix $s(VV^T + P_2)$ and no additional observations the target distribution is $F(n-m, 2\alpha + m) = F(n-m, m)$, whereas Lemma~\ref{lemma:expensive_UQ} implies that for prior with covariance matrix $VV^T + sP_2$ and $k$ additional observations (see Algorithm~\ref{algorithm:UQ_calibration}) we should use $F(n-m, 2\alpha + k(n-m)) = F(n-m, k(n-m))$ as a target distribution.

As a distance between distributions we choose standard $L_1$ norm $d(p_1, p_2) \equiv \int dx\left|p_1(x) - p_2(x)\right|$ approximated by central Riemann sum. Probability density is computed with RBF kernel density estimator.

The results are presented in Figure~\ref{fig:UQ} ($k=1$ in Algorithm~\ref{algorithm:UQ_calibration}) and Figure~\ref{fig:UQ_obs} ($k=1, 5, 25$ in Algorithm~\ref{algorithm:UQ_calibration}). From data presented on Figure~\ref{fig:UQ} it follows, that covariance matrices $A^{-1}$, $A^{-1}A^{-T}$ and $s(VV^{T} + P_2)$ (Lemma~\ref{lemma:cheap_UQ} with $\Psi = P_2$) fail to provide meaningful uncertainty calibration. The only reasonably tuned variant is given by covariance $VV^{T} + sP_2$ (Lemma~\ref{lemma:expensive_UQ}), where $s$ is fixed with additional observation $Px$. We can also see that the hierarchical modelling is marginally better than the point estimation. Figure~\ref{fig:UQ_obs} describes how uncertainty calibration depends on the number of observations $k$. We can see that when $k$ increases, the calibration for point estimation slightly improves, whereas the increase in $k$ leads to the degradation of uncertainty calibration for the hierarchical modelling. Nowhere the convergence to theoretical distribution is observed when $k$ is increased. This pathological behaviour supports the discussion in Section~\ref{section:When_probabilistic_projection_methods_are_sound}, where we state that probabilistic projection methods in they current form are unsuitable for Krylov subspace methods.

\begin{figure*}
    \centering
    \includegraphics[scale=0.4]{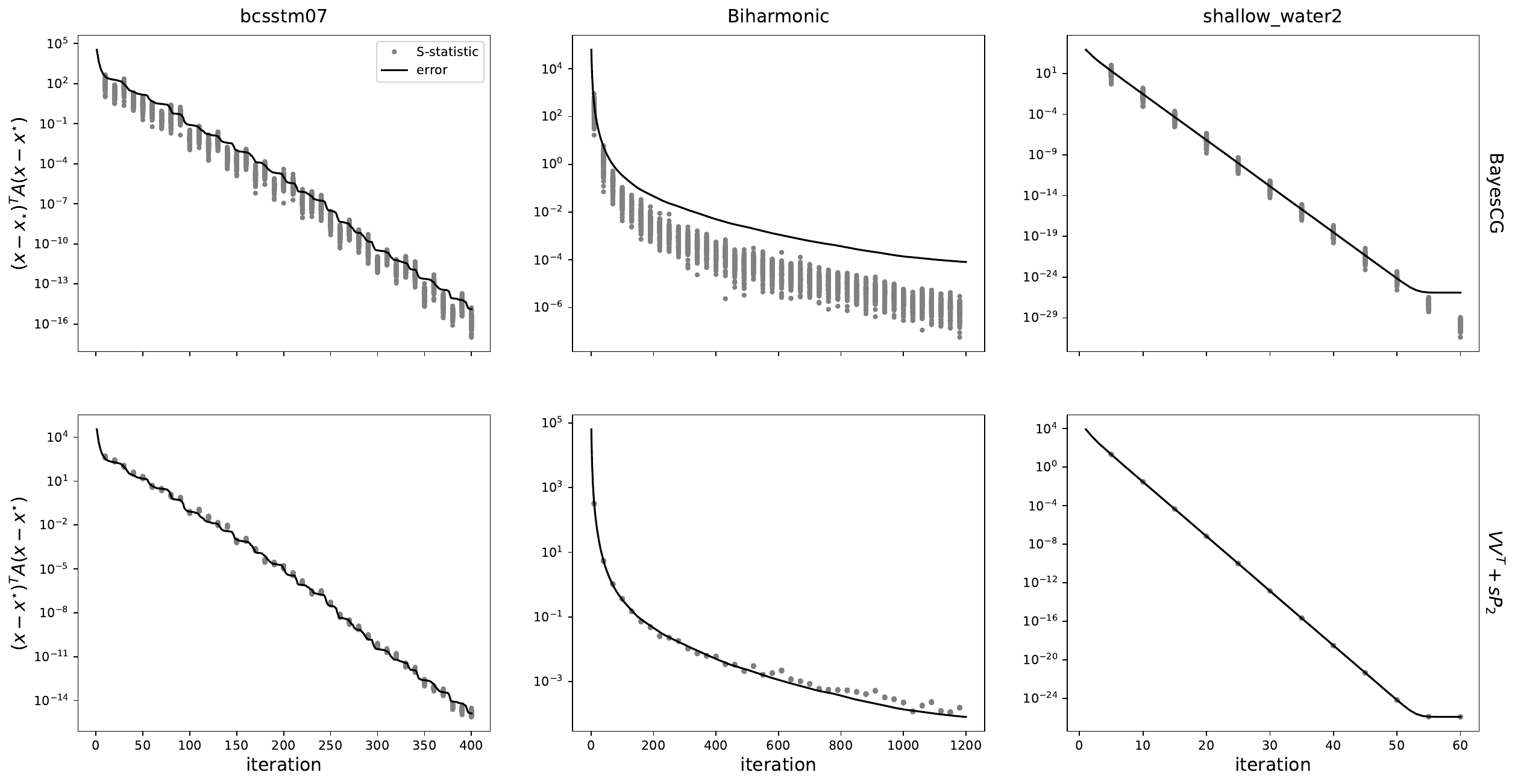}
    \caption{Figures demonstrate exact error $e_{m}^TAe_{m}$ on iteration $m$, and samples from $S-$statistic for three matrices. First row corresponds to uncertainty calibration proposed in \cite{reid2020probabilistic}. Second row shows samples from $S-$statistic calibrated according to Algorithm~\ref{algorithm:UQ_calibration} with ${\sf statistic} = S$. We can see that the statistical uncertainty calibration proposed in this article leads to better uncertainty in all three cases.}
    \label{fig:UQ_Reid}
\end{figure*}

\subsection{Comparison with \cite{reid2020probabilistic}}
\label{subsection:Comparison_with_BayesCG}

In this case, we use Algorithm~\ref{algorithm:UQ_calibration} with ${\sf statistic} = S$ and $k=1$. Note, that because for large $m$ the effect of rounding errors is significant, we use conjugate gradient to compute projection operator $P_1$. If one computes $P_1$ as in Algorithm~\ref{algorithm:UQ_calibration}, it gives an underestimation of error for large $m$, because in this case methods based on projection method \eqref{projection_method} converge much faster than the conjugate gradient as defined in Algorithm~\ref{algorithm:CG}.

For a given matrix $A>0$ we compare uncertainty calibration as follows. For method described in \cite{reid2020probabilistic} we sample $\delta$ from $\mathcal{N}(\cdot|0, \Sigma_{m})$, where $\Sigma_{m}$ is a posterior covariance matrix \eqref{Reid_posterior_cov} and plot $l=100$ samples from $S-$statistic $\delta^T A \delta$ for $m$ in regular intervals (each $10$ or each $20$ iterations). For our approach we use Algorithm~\ref{algorithm:UQ_calibration} with ${\sf statistic} = S$ and $k=1$, take $\mathbb{E}[s] = \widetilde{\beta}\big/(\widetilde{\alpha} - 1)$ and sample from $\mathbb{E}[s]\chi_{n-m}^2$, which is equivalent to $S-$statistic as explained in Lemma~\ref{lemma:simplified_S_statistic}. Results for test problems can be found in Figure~\ref{fig:UQ_Reid}. Overall, we can see that our approach leads to much better uncertainty calibration in all cases. The price for it is much more expensive uncertainty calibration than the one adopted in \cite{reid2020probabilistic}. Results for individual matrices are discussed below.

We use three positive definite matrices $A$:
\subsubsection{bcsstm07}
The first example is a symmetric positive definite $n=420$ matrix from SuiteSparse Matrix Collection: \url{https://sparse.tamu.edu/HB/bcsstm07}.

From the first column of Figure~\ref{fig:UQ_Reid} we can conclude that the method of \cite{reid2020probabilistic} leads to underestimation for approximately an order of magnitude for each iteration. Our approach gives almost exact error estimation in this case.

\subsubsection{Biharmonic equation}
For the second test problem we take biharmonic equation
\begin{equation}
\label{Biharmonic_equation}
    \begin{split}
    \frac{\partial^4}{\partial x^4}u(x, y) + 2\frac{\partial^4}{\partial x^2\partial y^2}u(x, y) + \frac{\partial^4}{\partial y^4}u(x, y) = f(x, y),\\~x, y \in \left[0, 1\right]^2,~\left.u(x, y)\right|_{\partial\Gamma} = 0,~\left.\partial_{n} u(x, y)\right|_{\partial\Gamma} = 0,
    \end{split}
\end{equation}
here $\partial_{n}$ is a derivative along the normal direction to the boundary $\partial \Gamma$. To discretize this equation, we use centered second-order finite difference approximation given by a $13$ point stencil
\begin{equation}
    s =
    \left[\begin{matrix}
    &&1&&\\
    &2&-8&2&\\
    1&-8&20&-8&1\\
    &2&-8&2&\\
    &&1&&
    \end{matrix}\right],
\end{equation}
with appropriate modification near the boundary (see \cite[Section 4]{tong1992multilevel}). Along each direction we take $n_x = n_y = (2^7 - 1)$ which results in size $n = 16129$ positive definite matrix.

Results for this equation are in the second column of Figure~\ref{fig:UQ_Reid}. The condition number is large and the convergence is extremely slow. As a result, uncertainty calibration from \cite{reid2020probabilistic} is poor. For example at $m=1200$ the exact error norm is about $\simeq 10^{-3}$, whereas an estimation is $\simeq10^{-6}$. Our statistical uncertainty calibration results in a mild overestimation of the exact error, which is better than the uncertainty from \cite{reid2020probabilistic}.

\subsubsection{shallow\_water2}

The third example is symmetric positive definite $n=81920$ matrix from SuiteSparse matrix collection: \url{https://sparse.tamu.edu/MaxPlanck/shallow_water2}.

Last column of Figure~\ref{fig:UQ_Reid} provides a summary of results. The convergence is good and for all practical purposes both our approach and the method from \cite{reid2020probabilistic} provide a reasonable estimation of error. The only difference is that our approach leads to smaller variance of the test statistic.

\begin{figure*}
    \centering
    \includegraphics[scale=0.4]{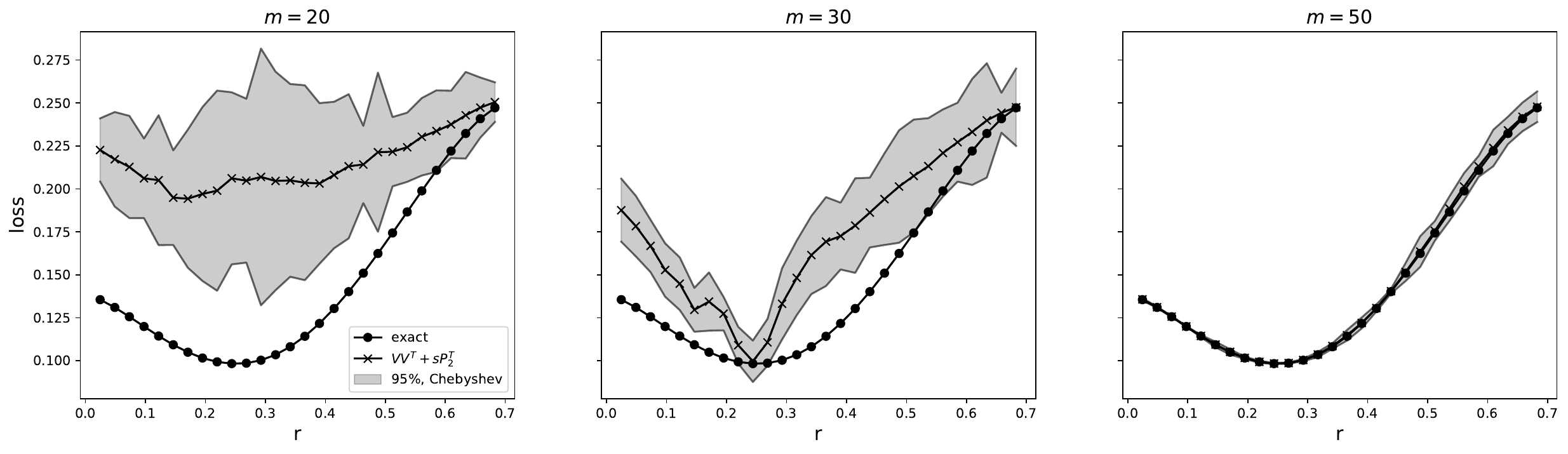}
    \caption{Figures demonstrate comparison of exact loss function \eqref{PDE_constrained_discrete} with an estimation obtained from probabilistic projection method from Theorem~\ref{theorem:orthogonal_projector} with $W = V = \begin{pmatrix}\widetilde{b} & A\widetilde{b} & \cdots & A^{m-1}\widetilde{b}\end{pmatrix}$, $\widetilde{b} = b\big/\left\|b\right\|_2$ for $m=20,~30,~50$. Shaded region is enclosed by curves $\mu_{m}(r)\pm \sigma_m(r)$, where $\mu_{m}(r)$ is an approximate mean value of $\mathcal{L}(r)$ and $\sigma_m(r)^2$ is approximate variance, both estimated using $30$ samples from the posterior distribution specified in Theorem~\ref{theorem:orthogonal_projector}.}
    \label{fig:UQ_PDE}
\end{figure*}

\subsection{Uncertainty quantification for PDE-constrained optimization}
\label{subsection:PDE_constrained}

As a last example we consider an optimal heating problem. Consider a diffusive heat transfer \cite[Section 5.1.3]{pletcher2012computational} from four point heat sources with unit heat fluxes in simple geometry
\begin{equation}
\label{heat_equation}
    \begin{split}
        &-\frac{\partial^2 T(x, y)}{\partial x^2}-\frac{\partial^2 T(x, y)}{\partial y^2} = \sum_{i=1}^{4} \delta(x - x_{i}) \delta (y - y_{i}), \\
        &x, y \in \Gamma \equiv\left[0, 1\right]^2, \left.T(x, y)\right|_{\partial \Gamma} = 0,
    \end{split}
\end{equation}
where $x_i, y_i$ are located in vertices of the square:
\begin{equation}
\begin{split}
    x_1 &= r\cos\left(\pi\big/4\right),~y_1 = r\sin\left(\pi\big/4\right);\\
    x_2 &= -r\cos\left(\pi\big/4\right),~y_2 = r\sin\left(\pi\big/4\right);\\
    x_3 &= -r\cos\left(\pi\big/4\right),~y_3 = -r\sin\left(\pi\big/4\right);\\
    x_4 &= r\cos\left(\pi\big/4\right),~y_4 = -r\sin\left(\pi\big/4\right).\\
\end{split}
\end{equation}
We consider the following PDE-constrained optimization problem
\begin{equation}
\label{PDE_constrained}
    \min_{r} \int dx dy \left(T(x, y) - T_{{\sf target}}\right)^2 \text{ s.t. } T(x, y) \text{ solves \eqref{heat_equation}}.
\end{equation}
Physically, the solution to the problem \eqref{PDE_constrained} is a distribution of sources that results in a smallest deviation of temperature field from the target temperature.

We use the finite element method (see \cite{ciarlet2002finite} for introduction) to discretise equation \eqref{heat_equation}. Namely, we approximate temperature field by finite sum
\begin{equation}
    \widetilde{T}(x, y) = \sum_{i=1}^{2^L-1}\sum_{j=1}^{2^L-1}\widetilde{T}_{ij} \phi^{L}_{i}(x)\phi^{L}_{j}(y),
\end{equation}
where $\phi_{i}^{L}(x) = \phi^{L}(x-x_{i}),i=1,\dots,2^{L}-1$ are rescaled and translated copies of a tent function
\begin{equation}
    \begin{split}
    \phi^{L}(x) = \left(1 + x\big/2^{L}\right)\text{Ind}\left[-1\big/2^{L}\leq x\leq0\right]+\\
    +\left(1 - x\big/2^{L}\right)\text{Ind}\left[0<x\leq1\big/2^{L}\right].
    \end{split}
\end{equation}
We then enforce the PDE in a weak form \eqref{heat_equation}, i.e., we apply the same Petrov--Galerkin condition that is in use for projection methods
\begin{equation}
\label{weak_form}
    \begin{split}
        \int_{\Gamma} dxdy~ \phi^{L}_{i}(x)\phi^{L}_{j}(y)\Bigg(-\frac{\partial^2 \widetilde{T}(x, y)}{\partial x^2}-\frac{\partial^2 \widetilde{T}(x, y)}{\partial y^2}\Bigg.\\\Bigg. - \sum_{i=1}^{4} \delta(x - x_{i}) \delta (y - y_{i})\Bigg) = 0.
    \end{split}
\end{equation}
Weak form \eqref{weak_form} leads to the system of linear equations\footnote{This equation can be rearranged into an ordinary linear system $Ax = b$, where $A$ is a matrix with two indices, by the use of lexicographic order. We do not cover this here in details, consult \url{https://github.com/VLSF/BayesKrylov} for the implementation.} $\sum_{k, l}A_{ikjl}\widetilde{T}_{kl} = b_{ij}$ that approximate continuous problem \eqref{heat_equation}. As a discrete counterpart of the continuous PDE-constrained optimization problem \eqref{PDE_constrained} we use the following
\begin{equation}
\label{PDE_constrained_discrete}
    \begin{split}
    &\mathcal{L}(r) \equiv \frac{1}{\left(2^{L}-1\right)^2}\sum_{i,j=1}^{2^{L}-1}\left(\widetilde{T}_{ij}(r) - T_{{\sf target}}\right)^2,\\
    &\min_{r} \mathcal{L}(r) \text{ s.t. } \sum_{k, l = 1}^{2^{L}-1}A_{ikjl}\widetilde{T}_{kl} = b_{ij}(r).
    \end{split}
\end{equation}
To test the uncertainty calibration, we approximate a solution of linear system using probabilistic projection method with $W = V = \begin{pmatrix}\widetilde{b} & A\widetilde{b} & \cdots & A^{m-1}\widetilde{b}\end{pmatrix}$, $\widetilde{b} = b\big/\left\|b\right\|_2$ and sample $\widetilde{T}$ from the posterior distribution. This procedure turns loss function $\mathcal{L}(r)$ into a random variable.

The resulting uncertainty and the loss function are depicted in Figure~\ref{fig:UQ_PDE}. We take $L=6$, so the size of the matrix is $n=3969$, $T_{\sf target} = 0.5$, and access three approximate solutions using ${\sf rank}(V) \equiv m = 20,~30,~50$. In each case we retrieve $30$ samples from $\mathcal{L}(r)$ and estimate mean $\mu_{m}(r)$ and variance $\sigma_{m}^2(r)$. The shaded region in Figure~\ref{fig:UQ_PDE} lies in-between curves $\mu_{m}(r)\pm 5\sigma_{m}(r)$. According to the Chebyshev inequality it contains a given sample from $\mathcal{L}(r)$ with probability $0.96$. In addition to $\mu_{m}(r)$ and variance $\sigma_{m}^2(r)$, Figure~\ref{fig:UQ_PDE} contains an ``exact'' loss function obtained from \eqref{PDE_constrained_discrete}, where linear system is solved with LU decomposition. Note, that since for all $r$ the same linear system is solved, we perform the uncertainty calibration (using $\mathbb{E}[s]$ from Lemma~\ref{lemma:expensive_UQ}) only once. So, the present example demonstrates that our uncertainty calibration can be cheaper than the one, proposed in \cite{reid2020probabilistic}.

From Figure~\ref{fig:UQ_PDE} we can see that the uncertainty calibration is not ideal. For example, in the case $m=30$ the exact value of $\mathcal{L}(r)$ is confidently rejected for $r\leq 0.2$ and $0.3\leq r<0.5$, the same is true for $m=20$ for $r\leq0.5$. Despite this fact, we argue that the present uncertainty is useful. Observe, that for $m=20$ the largest value $\sigma_{20}(r)$ resides in the region that corresponds to the smallest value of the exact loss. This fact can be exploited as follows. A natural way to perform a PDE-constrained optimization is to fit a surrogate model \cite[Section 5]{peherstorfer2018survey}, using multifidelity Gaussian process (see \cite{kennedy2000predicting} for a well-known example of a multifidelity model). The most widely used exploration rules (see \cite[Section IV]{shahriari2015taking}) are directly related to the variance $\sigma(r)$, which contains $\sigma_{m}$. To exemplify, the well known principle coined ``optimism in the face of uncertainty'' (see \cite[Section 7.1]{lattimore2020bandit}) used in the construction of UCB exploration rules, prescribes to choose the next point according to $\arg\min\left(\mu_{m}(r) - \sigma_{m}(r)\right)$. As such, with the present uncertainty calibration Gaussian process favours a correct region for the further exploration.
\section{Conclusion}
\label{section:Conclusion}
In the present work, we solved a problem of vanishing posterior covariance matrix from \cite{bartels2019probabilistic}. Our prior distribution allows for reconstructing the arbitrary projection method and results in a useful computationally inexpensive covariance matrix. We demonstrate on a set of linear problems that our statistical uncertainty calibration matches or outperforms the other existing approaches. As an application we consider a PDE-constrained optimization problem, for which we find that uncertainty is reasonable, albeit is not ideal.

We would like to stress that currently no probabilistic projection method (including the one developed in the current contribution) can rigorously reconstruct realistic Krylov subspace methods. However, a Bayesian interpretation of a two-grid AMG operator is possible. Since uncertainty is perfectly calibrated for AMG, it should be possible to exploit the proposed covariance matrix to construct an optimal projection operator.

\bibliographystyle{apalike}
\bibliography{refs.bib}
\end{document}